\newcommand{\N}{\mathbb{N}}
\newcommand{\m}{\mathfrak{m}}
\DeclareMathOperator{\Spec}{Spec}
\DeclareMathOperator{\Max}{Max}
\DeclareMathOperator{\Hom}{Hom}
\DeclareMathOperator{\gr}{gr}
\newtheorem{proposition}{Proposition}[section]
\newtheorem{lemma}[proposition]{Lemma}
\newtheorem{corollary}[proposition]{Corollary}
\newtheorem{theorem}[proposition]{Theorem}
\theoremstyle{definition}
\newtheorem{definition}[proposition]{Definition}
\newtheorem{example}[proposition]{Example}
\newtheorem{remark}[proposition]{Remark}
\newtheorem{conjecture}[proposition]{Conjecture}
\begin{document}

\title[Ideal avoidance property]{On the ideal avoidance property}

\author[J. Chen, A. Tarizadeh]{Justin Chen, Abolfazl Tarizadeh}
\address{Institute for Computational and Experimental Research in Mathematics, Providence, RI, 02903 USA}
\email{justin\_chen2@brown.edu}
\address{Department of Mathematics, Faculty of Basic Sciences, University of Maragheh,
P.O. Box 55136-553, Maragheh, Iran.}
\email{ebulfez1978@gmail.com \\
atarizadeh@maragheh.ac.ir}

\date{}
\subjclass[2010]{13A15, 13C05, 14A05}
\keywords{Generalized Prime Avoidance Lemma; ideal avoidance property (avoidance ideal); avoidance ring}

\begin{abstract} In this article, we investigate the avoidance property of ideals and rings.
Among the main results, a general version of the avoidance lemma is formulated.
It is shown that every idempotent ideal (and hence every pure ideal) has avoidance. The avoidance property of arbitrary direct products of avoidance rings is characterized.
It is shown that every overring of an avoidance domain is an avoidance domain. Next, we show that every avoidance $\mathbb{N}$-graded ring whose base subring is a finite field is a PIR. It is also proved that the avoidance property is preserved under flat ring epimorphisms.
Dually, we formulate a notion of strong avoidance, and show that it is reflected by pure morphisms.
\end{abstract}

\maketitle

\section{Introduction and Preliminaries}

In a classical article \cite{McCoy}, McCoy proved a surprising and vast generalization of the prime avoidance lemma. The core of his result asserts that if an ideal $I$ of a commutative ring $R$ is contained in the union of finitely many ideals of $R$, then some (positive) power of $I$ is contained in one of them.
Additionally, the thesis \cite{Quartararo} as well as the article \cite{Quartararo-Butts} investigates $u$-ideals and $u$-rings (two basic notions closely related to the topic of McCoy's result), and several deep and technical results were obtained. However, all of these important and general results have remained relatively unknown over the years. \\

The main goals of this article are the continuation of the above studies, as well as bringing them to the attention of the mathematical community. In this article, all rings are commutative with $1 \neq 0$.
We say that an ideal $I$ of a ring $R$ has \emph{the ideal avoidance property} or simply has \emph{avoidance} (or, \emph{u-ideal} in the sense of \cite{Quartararo-Butts}) if whenever $I_{1},\ldots,I_{n}$ are finitely many ideals of $R$ with $I \subseteq \bigcup\limits_{k=1}^{n}I_{k}$, then $I\subseteq I_{k}$ for some $k$.
Clearly an ideal $I$ has avoidance if and only if $I= \bigcup\limits_{k=1}^{n}I_{k}$ implies that $I=I_{k}$ for some $k$.
If every ideal of a ring $R$ has avoidance, then we say that $R$ is an \emph{avoidance ring}
(or, \emph{u-ring} in the sense of \cite{Quartararo-Butts}). It suffices to check this for finitely generated ideals: if every finitely generated ideal of a ring $R$ has avoidance, then $R$ is an avoidance ring (see \cite[Proposition 1.1]{Quartararo-Butts}). \\

The main contributions of this article are as follows: In Theorem \ref{Theorem 7 McCoy}(iv), a general version of the avoidance lemma is formulated.
It is shown that every idempotent ideal has avoidance (see Corollary \ref{Lemma 2 McCoy}).
In Corollary \ref{Corollary 5 fraction}, it is proved that every invertible fractional ideal has avoidance which improves \cite[Theorem 1.5]{Quartararo-Butts}.
The avoidance property of arbitrary direct products of avoidance rings is characterized (see Corollary \ref{Corollary infinite avoidance}).
In particular, we give two alternative proofs to the fact that a finite product of avoidance rings is always an avoidance ring.
In Theorem \ref{Theorem QBB}, it is shown that every overring of an avoidance domain is an avoidance domain.
We also investigate the behavior of the avoidance property along various constructions and ring maps in Theorems \ref{thm:graded}, \ref{Theorem 2}, Lemma \ref{Theorem 3} and their consequences.
In particular, we show that every avoidance $\mathbb{N}$-graded ring whose base subring is a finite field is a principal ideal ring (PIR). 
To prove this theorem, we first obtain a general result (see Lemma \ref{lemma T-C}) which asserts that every $\mathbb{N}$-graded ring whose base subring is a field $K$ has a presentation of the form $K[X]/I$ where $X$ is a set of indeterminates over $K$ and $I$ is an ideal of the polynomial ring $K[X]$ which is contained in the square of the maximal ideal $(X)$.  
Next, it is shown that the avoidance property is preserved under flat ring epimorphisms.
We also observe that the avoidance property need not be reflected along monomorphisms (see Example \ref{Example 1}).
This failure leads us to a notion of strong avoidance, and we show that strong avoidance is reflected along (cyclically) pure morphisms.

\section{The general avoidance lemma}

In this section we first reformulate McCoy's Theorem \cite{McCoy}, the most general version of the avoidance lemma that encompasses all known conditions under which avoidance holds, and then add (iv) as a new observation.

\begin{theorem}\label{Theorem 7 McCoy} Let $I$ and $I_{1},\ldots,I_{n}$ be finitely many ideals of a ring $R$ with $I\subseteq\bigcup\limits_{k=1}^{n}I_{k}$.
Then the following assertions hold.
\\
$\mathbf{(i)}$ If $n\leqslant2$ then $I\subseteq I_{1}$ or $I\subseteq I_{2}$.
\\
$\mathbf{(ii)}$ If $I$ can be covered by no proper subset of $\{I_{1},\ldots,I_{n}\}$ then there exists a positive integer $d\geqslant1$ such that $I^{d} \subseteq \bigcap\limits_{k=1}^{n}I_{k}$.
\\
$\mathbf{(iii)}$ There exists a positive integer $d\geqslant1$ such that $I^{d}\subseteq I_{k}$ for some $k$.
\\
$\mathbf{(iv)}$ If all but two of the $I_k$ are radical ideals, then $I \subseteq I_{k}$ for some $k$.
\end{theorem}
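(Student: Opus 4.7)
The plan is to argue by induction on $n$, using parts (i) and (ii) of the theorem as the key inputs. The base case $n \leq 2$ is exactly assertion (i), so no work is needed there. For the inductive step with $n \geq 3$, I split into two cases according to whether the cover $I \subseteq \bigcup_{k=1}^n I_k$ is irredundant.

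If the covering is redundant, i.e., $I \subseteq \bigcup_{k \neq j} I_k$ for some index $j$, then I drop $I_j$ and apply the inductive hypothesis to the remaining $n-1$ ideals. This is legitimate because deleting one ideal from the list cannot increase the number of non-radical ideals in the family, so the hypothesis ``all but two are radical'' is preserved (in fact strengthened). The inductive hypothesis then gives $I \subseteq I_k$ for some $k \neq j$.

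If the covering is irredundant, then part (ii) supplies an integer $d \geq 1$ with $I^d \subseteq \bigcap_{k=1}^n I_k$. Since $n \geq 3$ and at most two of the $I_k$ are non-radical, at least one index $k_0$ satisfies $I_{k_0} = \sqrt{I_{k_0}}$. From $I^d \subseteq I_{k_0}$ and radicality we get $I \subseteq I_{k_0}$, as required.

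The whole argument is quite short once (i) and (ii) are in hand; the only thing to be careful about is the bookkeeping in the induction, namely that dropping an ideal from a covering preserves the ``at most two non-radical'' hypothesis, and that in the irredundant case the bound $n \geq 3$ forces at least one of the $I_k$ to be radical. I do not anticipate a genuine obstacle here, since the idea is exactly the standard upgrade of the prime avoidance lemma from ``prime'' to ``radical,'' with the work of extracting a power already done in part (ii).
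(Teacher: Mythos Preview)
Your proof of (iv) is correct and follows essentially the same route as the paper: the paper argues by contradiction, passes directly to a minimal covering subset $S$, observes $|S|\geqslant 3$ via (i), and then applies (ii) together with the radical hypothesis---which is exactly your inductive peeling of redundant ideals followed by the irredundant case. The only differences are cosmetic (induction versus a one-step pass to a minimal subcover, and a direct argument versus contradiction).
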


\begin{proof} (i): It is clear.
\\
(ii): See the proof of \cite[Theorem 1]{McCoy}. \\
(iii): We may find a nonempty set $S$ in $\{I_{1},\ldots,I_{n}\}$ which covers $I$, but
no proper subset of $S$ covers $I$.
Applying (ii) for the set $S$, there exists a positive integer $d\geqslant1$ such that $I^{d}\subseteq I_{k}$ for all $I_{k}\in S$.
\\
(iv): Suppose $I$ is not contained in any of the ideals $I_{k}$. There exists a set $S$ in $\{I_{1},\ldots,I_{n}\}$ which covers $I$, but no proper subset of $S$ covers $I$. Then by (ii), there exists a positive integer $d\geqslant1$ such that $I^{d}\subseteq I_{s}$ where $I_{s}\in S$ is a radical ideal, because $|S|\geqslant3$.
It follows that $I \subseteq \sqrt{I}=\sqrt{I^{d}} \subseteq \sqrt{I_{s}}=I_{s}$ which is a contradiction.
\end{proof}

In Theorem \ref{Theorem 7 McCoy}(ii), it can be shown that the optimal bound for the power $d$ is $n-1$. For its proof see \cite{Backelin}. \\

Clearly every principal ideal has avoidance.
The following result and Corollary \ref{Corollary 5 fraction} provide interesting examples of ideals which have avoidance.

\begin{corollary}\label{Lemma 2 McCoy} Every idempotent ideal has avoidance.
\end{corollary}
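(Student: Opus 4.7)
The plan is to derive this as an immediate corollary of Theorem \ref{Theorem 7 McCoy}(iii), which already establishes that if $I \subseteq \bigcup_{k=1}^{n} I_k$ then there exists $d \geq 1$ with $I^d \subseteq I_k$ for some $k$. So the entire argument reduces to observing that the idempotence hypothesis collapses the power $I^d$ back to $I$.

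More precisely, I would first record the trivial fact that if $I$ is idempotent (meaning $I = I^2$), then a straightforward induction on $d$ shows $I^d = I$ for every $d \geq 1$: the base case is by definition, and $I^{d+1} = I \cdot I^d = I \cdot I = I$ using the inductive hypothesis. Next, suppose we are given finitely many ideals $I_1, \ldots, I_n$ of $R$ with $I \subseteq \bigcup_{k=1}^n I_k$. By Theorem \ref{Theorem 7 McCoy}(iii), there is an integer $d \geq 1$ and an index $k$ such that $I^d \subseteq I_k$. Combining these two observations gives $I = I^d \subseteq I_k$, which is exactly the avoidance property.

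There is no real obstacle here; the work has been done in establishing part (iii) of the theorem, and the corollary is simply the specialization to the case where taking powers of $I$ has no effect. It is worth noting in passing that this immediately implies the corresponding statement for pure ideals (mentioned in the abstract and the introduction), since every pure ideal is in particular idempotent, but that observation can be deferred or stated separately rather than inside the proof of the corollary itself.
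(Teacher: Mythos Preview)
Your proof is correct and follows exactly the same approach as the paper: apply Theorem~\ref{Theorem 7 McCoy}(iii) to obtain $I^{d}\subseteq I_{k}$ for some $k$, then use idempotence to conclude $I=I^{d}\subseteq I_{k}$. The only difference is cosmetic --- you spell out the trivial induction showing $I^{d}=I$, whereas the paper leaves this implicit.
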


\begin{proof} Let $I$ be an idempotent ideal ($I=I^{2}$) of a ring $R$.
Let $I_{1},\ldots,I_{n}$ be finitely many ideals of $R$ with $I \subseteq \bigcup\limits_{k=1}^{n}I_{k}$.
Then by Theorem \ref{Theorem 7 McCoy}(iii), there exists some $d\geqslant1$ such that $I=I^{d}\subseteq I_{k}$ for some $k$.
\end{proof}

Every ideal generated by a set of idempotents is an idempotent ideal and hence has avoidance.
Although every finitely generated idempotent ideal is generated by an idempotent element, an arbitrary idempotent ideal is not necessarily generated by idempotent elements.
Also note that if $I$ is a pure ideal of a ring $R$ (i.e.
for each $a\in I$ there exists some $b\in I$ such that $a=ab$, or equivalently, $R/I$ is a flat $R$-module) then it is an idempotent ideal and hence has avoidance.

\begin{remark} Note that Theorem \ref{Theorem 7 McCoy}(iii) in particular yields that $I \subseteq \sqrt{I_k}$ for some $k$, which gives an alternative proof of our recent result \cite[Theorem 2.2]{Tarizadeh-Chen}.
\end{remark}

\section{The ideal avoidance property}

We begin this section with a technical result which is useful in studying the avoidance property.

\begin{lemma}\label{Lemma 4 Q-B} Let $I_{1},\ldots,I_{n}$ be finitely many ideals of a ring $R$ and $M$ a finitely generated faithful $R$-module.
If $M=\bigcup\limits_{k=1}^{n}I_{k}M$, then $I_{k}=R$ for some $k$.
\end{lemma}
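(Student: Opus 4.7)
My strategy is to reduce, via the determinant trick (Cayley--Hamilton), to showing that some $I_k M = M$, and then apply the avoidance property of the idempotent ideal $R$ (Corollary~\ref{Lemma 2 McCoy}) to an appropriate family of transporter ideals. Recall that if $IM = M$ for a finitely generated module $M$ and ideal $I$, the determinant trick produces $a \in I$ with $(1-a) M = 0$; combined with $\operatorname{Ann}(M) = 0$ from faithfulness, this forces $1 = a \in I$, so $I = R$. Thus it suffices to exhibit some single $k$ with $I_k M = M$.

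Fix generators $m_1, \ldots, m_s$ of $M$. For each $i$ and $k$ define the transporter ideal $J_k^{(i)} := \{r \in R : r m_i \in I_k M\}$. Since every $r m_i$ lies in $M = \bigcup_k I_k M$, one obtains $R = \bigcup_k J_k^{(i)}$ for each $i$. More generally, for each tuple $\vec{c} = (c_1, \ldots, c_s) \in \{1, \ldots, n\}^s$, the intersection $J_{\vec{c}} := \bigcap_i J_{c_i}^{(i)}$ is an ideal of $R$, and a routine check gives $R = \bigcup_{\vec{c}} J_{\vec{c}}$, a finite union of ideals (indexed by the finite set $\{1,\ldots,n\}^s$). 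Applying Corollary~\ref{Lemma 2 McCoy} to the idempotent ideal $R$ yields $R = J_{\vec{c}^*}$ for some tuple $\vec{c}^* = (c_1^*, \ldots, c_s^*)$, which translates to $m_i \in I_{c_i^*} M$ for every $i$.

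If all the $c_i^*$ coincide with a single value $c$, then $M = \sum R m_i \subseteq I_c M$, so $I_c M = M$ and we are done. The main obstacle is the case where the $c_i^*$ differ: from the above data alone, one can only deduce $\sum_i I_{c_i^*} = R$ (again by the determinant trick and faithfulness). To complete the argument I would synchronize the indices $c_i^*$ either by iterating the above with varying generating sets (for instance, replacing $m_1$ by combinations $m_1 + \lambda m_j$ for various $\lambda \in R$, producing additional transporter-ideal constraints), or by induction on $n$. For $n = 2$ one uses the elementary fact that a module cannot be the union of two proper submodules: take $x \in M \setminus I_1 M$ so that $x \in I_2 M$; then any $y \in I_1 M$ gives $x + y \in M = I_1 M \cup I_2 M$, and both cases force $y \in I_2 M$, so $I_1 M \subseteq I_2 M = M$. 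For $n \geq 3$, one passes to the quotient $\bar{M} := M / I_n M$, observing $\bar{M} = \bigcup_{k < n} I_k \bar{M}$, and handles the possible loss of faithfulness by working instead over $R/\operatorname{Ann}(\bar{M})$ and invoking the inductive hypothesis there. Synchronizing the indices $c_i^*$ into a single common $c$ is the principal technical hurdle.
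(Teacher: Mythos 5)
Your proposal has a genuine gap, which you yourself flag: the ``synchronization'' of the indices $c_i^*$ is not a technical hurdle to be smoothed over later --- it \emph{is} the content of the lemma, and none of the machinery you set up makes progress on it. Note first that the transporter-ideal apparatus is vacuous: applying avoidance of the unit ideal to $R=\bigcup_{\vec{c}}J_{\vec{c}}$ just locates a tuple $\vec{c}^*$ with $1\in J_{\vec{c}^*}$, i.e.\ $m_i\in I_{c_i^*}M$ for each $i$ --- which is immediate from $M=\bigcup_k I_kM$ by looking at the generators directly. So after the first two paragraphs you are exactly where you started. Your reduction via the determinant trick (some $I_kM=M$ implies $I_k=R$ by faithfulness) is correct, as is the $n=2$ case (a module is never the union of two proper submodules). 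But the inductive step you sketch for $n\geqslant 3$ does not close: setting $\bar{M}=M/I_nM$ and passing to $R/\operatorname{Ann}_R(\bar{M})$ to restore faithfulness, the inductive hypothesis only yields that some $I_k$ ($k<n$) becomes the unit ideal modulo $\operatorname{Ann}_R(\bar{M})=(I_nM:_RM)$, i.e.\ $I_k+(I_nM:_RM)=R$. Unwinding, this gives $M=I_kM+I_nM$, hence $I_k+I_n=R$ by the determinant trick --- which is strictly weaker than the desired conclusion that $I_k=R$ or $I_n=R$ (compare: $(2)+(3)=\mathbb{Z}$). So the loss of faithfulness is not a bookkeeping issue; it destroys the conclusion you need.

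The paper does not prove the lemma from scratch either: it defers to \cite[Proposition 1.4]{Quartararo-Butts}, whose induction on $n$ is of a different shape. There one first reduces to an \emph{irredundant} covering $M=\bigcup_k I_kM$ (if some $I_jM$ is superfluous, apply the inductive hypothesis to the smaller family over the same ring $R$, with $M$ still faithful --- this is the step that keeps faithfulness intact, unlike quotienting by $I_nM$), and then exploits irredundancy via a McCoy-type multiplication argument: choosing $x_j\in I_jM$ lying in no other $I_kM$, one shows $\prod_{k\neq j}I_k\cdot M\subseteq I_jM$, and combines this with $(\sum_k I_k)M=M$ (hence $\sum_k I_k=R$ by the determinant trick) to force some $I_kM=M$. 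If you want a self-contained proof, that is the combinatorial step you still need to supply.
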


\begin{proof} It is proved exactly like \cite[Proposition 1.4]{Quartararo-Butts} by induction on $n$.
\end{proof}

One can naturally generalize the notion of the ideal avoidance property to modules (see also \cite{Quartararo-Butts}): for a ring $R$, we say that an $R$-module $M$ has \emph{avoidance} if whenever $M$ is equal to a finite union of $R$-submodules, then $M$ must be equal to one of them.
If $N \subseteq M$ is an $R$-submodule, then $N$ has avoidance if and only if whenever $N$ is contained in a finite union of $R$-submodules of $M$, then $N$ is contained in one of them. \\

In dealing with union of ideals, it is important to notice that if $\{ I_{k} \}$ is a (finite or infinite) family of ideals of a ring $R$ and $I$ is an ideal of $R$ which is generated by a subset of $\bigcup\limits_{k}I_{k}$, then in general, we cannot deduce that $I$ is contained in $\bigcup\limits_{k}I_{k}$.
The same holds for submodules of a module. \\

Recall that for a given ring $R$, an $R$-submodule $I$ of $T(R)$ (the total ring of fractions of $R$) is said to be a \emph{fractional ideal} of $R$ if $aI\subseteq R$ for some nonzerodivisor $a\in R$.
If $I$ and $J$ are fractional ideals of $R$ then $IJ$, the set of all finite sums $\sum\limits_{i=1}^{n}x_{i}y_{i}$ with $x_{i}\in I$ and $y_{i}\in J$, is also a fractional ideal of $R$.
A fractional ideal $I$ of $R$ is said to be invertible if $IJ=R$ for some fractional ideal $J$ of $R$.
The following result slightly improves \cite[Theorem 1.5]{Quartararo-Butts}.

\begin{corollary}\label{Corollary 5 fraction} Every invertible fractional ideal has avoidance.
\end{corollary}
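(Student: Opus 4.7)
The plan is to reduce to Lemma \ref{Lemma 4 Q-B} by showing that if $I$ is an invertible fractional ideal, then $I$ is a finitely generated faithful $R$-module, and any cover $I \subseteq L_1 \cup \cdots \cup L_n$ by $R$-submodules of $T(R)$ can be rewritten in the form $I = \bigcup_k A_k I$ with $A_k$ ideals of $R$.

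To set up, fix a fractional ideal $J$ with $IJ = R$ and write $1 = \sum_{i=1}^m x_i y_i$ with $x_i \in I$ and $y_i \in J$. For any $x \in I$ one has $x = \sum_i (xy_i)x_i$ with $xy_i \in IJ = R$, so $x_1,\ldots,x_m$ generate $I$ as an $R$-module; and any $a \in R$ annihilating $I$ satisfies $a = a \cdot 1 = \sum a x_i y_i = 0$, so $I$ is faithful. Now for each $k$ define the ideal
\[
A_k := \{a \in R : aI \subseteq L_k\}
\]
of $R$; the containment $A_k I \subseteq L_k \cap I$ holds by definition.

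The key step is the reverse inclusion $L_k \cap I \subseteq A_k I$. Given $x \in L_k \cap I$, use the decomposition $x = \sum_i (xy_i)x_i$ with $xy_i \in R$: for each $i$ and every generator $x_j$, the element $(xy_i)x_j = x \cdot (y_i x_j)$ lies in $L_k$ since $x \in L_k$, $y_i x_j \in JI = R$, and $L_k$ is an $R$-submodule. Thus $xy_i \cdot I \subseteq L_k$, whence $xy_i \in A_k$ and $x \in A_k I$. Combining this with $I \subseteq \bigcup_k L_k$ gives $I = \bigcup_k A_k I$, and Lemma \ref{Lemma 4 Q-B} then yields $A_k = R$ for some $k$, i.e., $I \subseteq L_k$. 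The main obstacle is recognizing the identity $L_k \cap I = A_k I$: what makes it work is not mere finite generation, but rather the explicit ``partition of unity'' $1 = \sum x_i y_i$ supplied by invertibility, which is precisely what allows the pointwise condition $x \in L_k$ to be transferred to the ideal-level condition $xy_i \in A_k$.
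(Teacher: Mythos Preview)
Your proof is correct and follows essentially the same strategy as the paper: both reduce to Lemma~\ref{Lemma 4 Q-B} by rewriting the cover as $I=\bigcup_k (\text{ideal}_k)\cdot I$. The only difference is cosmetic: where you define $A_k=(L_k:I)$ and verify $L_k\cap I=A_kI$ via the partition of unity, the paper takes the ideals to be $JM_k$ (with $M_k=L_k\cap I$) and obtains $M_k=(IJ)M_k=I(JM_k)$ in one line---these are the same ideals, since $JM_k=(M_k:I)$ whenever $IJ=R$.
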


\begin{proof} Let $I$ be an invertible fractional ideal of a ring $R$ and
let $M_{1},\ldots,M_{n}$ be finitely many $R$-submodules of $T(R)$ with $I= \bigcup\limits_{k=1}^{n}M_{k}$.
Since $I$ is invertible, there exists an $R$-submodule $J$ of $T(R)$ such that $IJ=R$.
Then for each $k$, we have $M_{k}=RM_{k}=I(JM_{k})$ and $JM_{k}$ is an ideal of $R$, because $JM_{k}\subseteq JI=R$.
So $I= \bigcup\limits_{k=1}^{n}I(JM_{k})$.
Now every invertible ideal is finitely generated, and also faithful (since it contains a nonzerodivisor of $R$).
Hence $JM_{k}=R$ by Lemma \ref{Lemma 4 Q-B}, and so $I=IR=IJM_{k}=RM_{k}=M_{k}$ for some $k$.
\end{proof}

By an \emph{invertible module} we mean a module $M$ over a ring $R$ such that there exists an $R$-module $N$ with $M \otimes_{R} N \cong R$ as $R$-modules.
It is well known that an $R$-module $M$ is invertible if and only if $M$ is a finitely generated projective $R$-module of constant rank 1, or equivalently, the canonical map $M\otimes_{R}M^{\ast}\rightarrow R$ given by $m\otimes f\mapsto f(m)$
is an isomorphism, where $M^{\ast}=\Hom_{R}(M,R)$ is the dual of $M$.
In this regard, we have the following result.

\begin{corollary} Let $R$ be a ring such that the Picard group of its total ring of fractions is trivial.
Then every invertible $R$-module has avoidance.
\end{corollary}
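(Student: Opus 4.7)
The plan is to reduce to Corollary \ref{Corollary 5 fraction} by showing that the hypothesis on $\mathrm{Pic}(T(R))$ forces every invertible $R$-module $M$ to be isomorphic to an invertible fractional ideal of $R$. Since $M$ is finitely generated projective of rank $1$, it is $R$-flat, so the natural map $M \to M \otimes_{R} T(R)$ is injective. Moreover $M \otimes_{R} T(R)$ is an invertible $T(R)$-module, and by the triviality of $\mathrm{Pic}(T(R))$ it is free of rank $1$. Fix an isomorphism $\varphi \colon M \otimes_{R} T(R) \xrightarrow{\sim} T(R)$, and let $I \subseteq T(R)$ denote the image of $M$ under $\varphi$ composed with $m \mapsto m \otimes 1$; then $I \cong M$ as $R$-modules.

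Next I would verify that $I$ is a fractional ideal: since $M$, and hence $I$, is finitely generated over $R$, each of finitely many generators can be written as $a_i/s_i \in T(R)$ with $s_i$ a nonzerodivisor, and the product $s := \prod s_i$ satisfies $sI \subseteq R$. To promote $I$ to an invertible fractional ideal, I would set $J := \{t \in T(R) : tI \subseteq R\}$ and show $IJ = R$. Since $M$ is invertible, the evaluation $M \otimes_R M^{\ast} \to R$ is an isomorphism; transporting this through $M \cong I$, one obtains $m_1,\dots,m_n \in I$ and $f_1,\dots,f_n \in \Hom_{R}(I,R)$ with $\sum f_i(m_i) = 1$. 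Each $f_i$ extends by tensoring with $T(R)$ to a $T(R)$-linear endomorphism of $I \otimes_R T(R) \cong T(R)$, which is multiplication by some $t_i \in T(R)$; the inclusion $f_i(I) \subseteq R$ gives $t_i \in J$, and then $\sum t_i m_i = \sum f_i(m_i) = 1$ shows $1 \in IJ$, hence $IJ = R$. (That $J$ is itself a fractional ideal follows from $I$ containing a nonzerodivisor, which in turn follows from the identity $IT(R) = T(R)$.)

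Having realized $M$ as an invertible fractional ideal $I$, Corollary \ref{Corollary 5 fraction} applies and yields avoidance of $I$, which transports back across the $R$-module isomorphism $M \cong I$.

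The only delicate step is the invertibility of $I$ as a fractional ideal: converting the abstract trivialization $M \otimes_R M^{\ast} \cong R$ into an honest identity $IJ = R$ inside $T(R)$ requires extending $R$-linear forms on $I$ to $T(R)$-linear maps on $IT(R) = T(R)$, which is exactly where the flatness of $I$ and the triviality of $\mathrm{Pic}(T(R))$ are both used. Once that extension is in place, the remaining steps are routine.
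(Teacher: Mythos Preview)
Your proposal is correct and follows the same route as the paper: reduce to Corollary~\ref{Corollary 5 fraction} by realizing the invertible $R$-module as an invertible fractional ideal of $R$. The only difference is that the paper outsources this realization to \cite[Theorem~4.9]{Tarizadeh yalniz}, whereas you supply the argument directly (embedding $M$ in $M\otimes_R T(R)\cong T(R)$ and reading off invertibility from the evaluation isomorphism).
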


\begin{proof} From hypothesis and \cite[Theorem 4.9]{Tarizadeh yalniz} we obtain that every invertible $R$-module is isomorphic to an invertible fractional ideal of $R$.
Then apply Corollary \ref{Corollary 5 fraction}.
\end{proof}

In relation with the above result, for a given ring $R$, if $T(R)$ has finitely many maximal ideals (e.g. $R$ is reduced with finitely many minimal primes), then the Picard group of $T(R)$ is trivial. For the details see \cite[Corollaries 4.10, 4.11]{Tarizadeh yalniz}.\\

Recall that a \emph{B\'{e}zout ring} is a ring such that every finitely generated ideal is principal.
It is not hard to see that the class of Bezout rings is stable under taking quotients, localizations and finite products.
Every PIR is a B\'{e}zout ring.
Also recall that a \emph{Pr\"{u}fer domain} is a ring $R$ such that every nonzero finitely generated ideal of $R$ is invertible. (Note that this definition implies that $R$ is a domain: if $ab=0$ for some $a,b\in R$ with $a \ne 0$, then $Ra$ is invertible, so there exists an $R$-submodule $I \subseteq T(R)$ such that $(Ra)I=R$. It follows that $I=Rx$ for some $x \in T(R)$. Then $1=(ra)(r'x)$ for some $r, r' \in R$ and so $b=(rab)(r'x)=0$.) \\

In the following result we list some of the major examples of avoidance rings, all of which are well known and can be found in \cite{Quartararo-Butts} or \cite{Quartararo}.

\begin{corollary} \label{cor:exsAvoidance}
If for a ring $R$ any of the following conditions holds, then $R$ is an avoidance ring.
\\
$\mathbf{(i)}$ $R$ is a B\'{e}zout ring.
\\
$\mathbf{(ii)}$ $R$ is a Pr\"{u}fer domain.
\\
$\mathbf{(iii)}$ $R$ contains an infinite field as a subring.
\end{corollary}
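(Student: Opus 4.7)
The plan is to reduce each case to results already established in the paper, using the criterion mentioned in the introduction that it suffices to verify the avoidance property on finitely generated ideals.

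For part (i), I would observe that a principal ideal $I = Ra$ trivially has avoidance: if $Ra \subseteq I_1 \cup \cdots \cup I_n$, then $a \in I_k$ for some $k$, hence $Ra \subseteq I_k$. Since every finitely generated ideal in a B\'ezout ring is principal, every finitely generated ideal has avoidance, so $R$ is an avoidance ring by \cite[Proposition~1.1]{Quartararo-Butts}.

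For part (ii), I would again reduce to finitely generated ideals. The zero ideal trivially has avoidance. Any nonzero finitely generated ideal $I$ of a Pr\"ufer domain $R$ is invertible, and hence has avoidance by Corollary~\ref{Corollary 5 fraction} (note that ideals of $R$ are in particular $R$-submodules of $T(R)$, so Corollary~\ref{Corollary 5 fraction} applies directly when $I$ is expressed as a union of ideals of $R$). So all finitely generated ideals of $R$ have avoidance, and $R$ is an avoidance ring.

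For part (iii), the key observation is that if $K$ is an infinite field contained in $R$ as a subring, then every ideal $J$ of $R$ is a $K$-vector subspace of $R$, since for $\lambda \in K$ and $x \in J$ one has $\lambda x \in J$. Now suppose an ideal $I$ satisfies $I \subseteq \bigcup_{k=1}^n I_k$. Intersecting with $I$, we get a covering $I = \bigcup_{k=1}^n (I \cap I_k)$ of the $K$-vector space $I$ by finitely many $K$-vector subspaces. The classical fact that a vector space over an infinite field cannot be written as a finite union of proper subspaces then forces $I \cap I_k = I$, i.e.\ $I \subseteq I_k$, for some $k$. Thus every ideal of $R$ has avoidance.

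The only mildly substantive step is the last one, where one invokes the ``vector space over an infinite field is not a finite union of proper subspaces'' lemma; this is a standard result whose proof uses exactly the same affine-line argument as the prime avoidance lemma itself, and requires no further work beyond the observation that ideals are automatically $K$-subspaces. I expect no serious obstacle in any of the three cases.
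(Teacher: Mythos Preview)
Your proposal is correct and follows essentially the same approach as the paper. The paper's own proof is extremely terse --- it simply cites \cite[Proposition~1.1(2)]{Quartararo-Butts} for (i), invokes Corollary~\ref{Corollary 5 fraction} for (ii), and cites \cite[Proposition~1.7]{Quartararo-Butts} for (iii) --- and your proposal merely unpacks these citations: the principal-ideal argument for B\'ezout rings, the invertibility-plus-Corollary~\ref{Corollary 5 fraction} argument for Pr\"ufer domains, and the ``no finite union of proper subspaces over an infinite field'' argument for (iii), which is exactly the content behind \cite[Proposition~1.7]{Quartararo-Butts}.
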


\begin{proof}
(i): It is easy, see also \cite[Proposition 1.1(2)]{Quartararo-Butts}.
\\
(ii): It follows from Corollary \ref{Corollary 5 fraction}.
\\
(iii): It is an immediate consequence of \cite[Proposition 1.7]{Quartararo-Butts}.
\end{proof}

Every Boolean ring, and more generally every absolutely flat (i.e. von Neumann regular) ring, as well as every valuation ring, is a B\'{e}zout ring, hence is an avoidance ring.
In particular, any reduced Artinian ring is absolutely flat, so is an avoidance ring.
Also, as Pr\"{u}fer domains are a natural generalization of Dedekind domains to the non-Noetherian setting, every Dedekind domain is an avoidance ring.
\\

Our next goal is to study when the avoidance property is preserved by various ring-theoretic operations.
Let $\phi : R \to S$ be a ring map.
Recall that an ideal $J$ of $S$ is \emph{extended} under $\phi$ if $J = I^e$ for some ideal $I$ of $R$, or equivalently, $J^{ce} = J$.
Dually, an ideal $I$ of $R$ is \emph{contracted} under $\phi$ if $I = J^c$ for some ideal $J$ of $S$, or equivalently, $I = I^{ec}$.

\begin{lemma} \label{Lemma 1} Let $\phi : R \to S$ be a ring map such that every ideal of $S$ is extended under $\phi$.
If $R$ is an avoidance ring, then $S$ is as well.
\end{lemma}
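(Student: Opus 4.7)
The plan is to take an arbitrary finite cover $J \subseteq \bigcup_{k=1}^n J_k$ of an ideal of $S$ by ideals of $S$, pull it back to a cover of ideals of $R$, apply the avoidance hypothesis there, and then push the inclusion forward using the extended-ideal assumption. This is a very natural contraction/extension argument and, given the hypothesis, I do not expect any real obstacle.

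In detail, I would first observe the general fact that contraction respects unions in the following sense: if $J \subseteq J_1 \cup \cdots \cup J_n$ in $S$, then $J^c \subseteq J_1^c \cup \cdots \cup J_n^c$ in $R$. Indeed, for any $r \in J^c$ we have $\phi(r) \in J \subseteq \bigcup_k J_k$, so $\phi(r) \in J_k$ for some $k$, whence $r \in J_k^c$. This step is just unwinding definitions.

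Next I would invoke the assumption that $R$ is an avoidance ring, applied to the ideal $J^c$ of $R$ and the finite collection $\{J_k^c\}$. This produces an index $k$ with $J^c \subseteq J_k^c$. Extending this containment along $\phi$ yields $J^{ce} \subseteq J_k^{ce}$, and the hypothesis that every ideal of $S$ is extended translates into the equalities $J = J^{ce}$ and $J_k = J_k^{ce}$. Substituting gives $J \subseteq J_k$, which is exactly what it means for $J$ to have avoidance.

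The only thing to keep in mind is the equivalence \emph{$J$ is extended under $\phi$ iff $J = J^{ce}$}, which is standard and is mentioned just above the statement. There is no subtlety about whether $R$'s avoidance property applies, since we are only using it against finitely many ideals of $R$, exactly as in the definition. Hence the conclusion follows uniformly, regardless of $J$, and $S$ is an avoidance ring.
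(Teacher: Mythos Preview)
Your proof is correct and follows essentially the same route as the paper's: contract the cover to $R$, apply avoidance there to obtain $J^{c}\subseteq J_{k}^{c}$, then extend and use $J=J^{ce}$, $J_{k}=J_{k}^{ce}$ to conclude $J\subseteq J_{k}$. The paper's argument is identical, just written more tersely.
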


\begin{proof} If $J, J_1,\ldots,J_n$ are finitely many ideals of $S$ with $J \subseteq \bigcup\limits_{k=1}^{n}J_k$, then $\phi^{-1}(J) \subseteq \phi^{-1}(\bigcup\limits_{k=1}^{n}J_k)=
\bigcup\limits_{k=1}^{n}\phi^{-1}(J_k)$.
Since $R$ has avoidance, $J^{c}=\phi^{-1}(J) \subseteq (J_k)^{c}=\phi^{-1}(J_k)$ for some $k$.
Thus $J=J^{ce} \subseteq (J_k)^{ce}=J_k$.
\end{proof}

\begin{corollary}\cite[Proposition 1.3]{Quartararo-Butts} \label{Corollary 1}
If $R$ is an avoidance ring, then any quotient or localization of $R$ is an avoidance ring.
\end{corollary}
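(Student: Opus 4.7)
The plan is to realize this corollary as a direct application of Lemma \ref{Lemma 1}, whose hypothesis requires only that every ideal of the target ring be extended from $R$ along the given map. So the entire task reduces to verifying that this extension property holds for the two canonical maps in question.

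First I would handle the quotient case. For an ideal $I$ of $R$ and the canonical surjection $\pi : R \to R/I$, recall the standard bijection between ideals of $R/I$ and ideals of $R$ containing $I$: every ideal of $R/I$ has the form $K/I$ for some ideal $K \supseteq I$ of $R$. Since $K/I = \pi(K)(R/I) = K^e$, every ideal of $R/I$ is extended. Lemma \ref{Lemma 1} then gives that $R/I$ is an avoidance ring.

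Next I would handle the localization case. For a multiplicative subset $S \subseteq R$ and the canonical map $\iota : R \to S^{-1}R$, the standard fact is that every ideal $J$ of $S^{-1}R$ equals $S^{-1}(\iota^{-1}(J)) = (\iota^{-1}(J))^e$, so again every ideal of $S^{-1}R$ is extended. Lemma \ref{Lemma 1} applies and yields that $S^{-1}R$ is an avoidance ring.

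Neither step presents any real obstacle, since both facts about extended ideals are standard and there is nothing specific to the avoidance property to verify beyond invoking the lemma. The only point worth noting is that the hypothesis of Lemma \ref{Lemma 1} is exactly tailored to these two constructions, so the corollary is essentially a restatement in two familiar special cases.
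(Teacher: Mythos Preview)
Your proposal is correct and matches the paper's approach exactly: the paper's proof is the single sentence ``It follows from Lemma \ref{Lemma 1},'' and you have simply supplied the standard verification that every ideal of a quotient or localization is extended, which the paper leaves implicit.
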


\begin{proof}
It follows from Lemma \ref{Lemma 1}.
\end{proof}

We rediscovered the following result independently of \cite[Theorem 1.18]{Quartararo}:

\begin{theorem}\label{Theorem 4} If $R = \prod_{i=1}^n R_i$ is a finite product of rings, then $R$ is an avoidance ring if and only if $R_i$ is an avoidance ring for all $i$.
\end{theorem}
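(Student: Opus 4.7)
The plan is as follows. The forward implication is immediate from Corollary \ref{Corollary 1}, since each factor $R_i$ arises as a quotient of $R$ via the canonical projection $R \twoheadrightarrow R_i$. For the reverse implication, I would induct on $n$ and reduce to the case $R = R_1 \times R_2$ with both factors avoidance. The structural fact to exploit is that every ideal $J$ of $R_1 \times R_2$ splits as $J = I_1 \times I_2$ with $I_i$ an ideal of $R_i$, using the orthogonal idempotents $(1,0)$ and $(0,1)$; so every covering relation takes the form $I_1 \times I_2 \subseteq \bigcup_{k=1}^{m}(J_k^{(1)} \times J_k^{(2)})$, and the goal becomes to find a single index $k$ for which $I_1 \subseteq J_k^{(1)}$ and $I_2 \subseteq J_k^{(2)}$ hold simultaneously.

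The main difficulty is that naively projecting to each factor and applying avoidance gives a good index in each coordinate, but these indices need not agree. To get around this, I would fix an arbitrary element $y \in I_2$ and set $S(y) = \{k : y \in J_k^{(2)}\}$. Then for every $x \in I_1$, the element $(x, y)$ lies in some $J_k^{(1)} \times J_k^{(2)}$, forcing $x \in J_k^{(1)}$ for some $k \in S(y)$; hence $I_1 \subseteq \bigcup_{k \in S(y)} J_k^{(1)}$, and avoidance of $R_1$ (applied to the finite union indexed by $S(y)$) produces some $k \in S(y)$ with $I_1 \subseteq J_k^{(1)}$. Writing $T = \{k : I_1 \subseteq J_k^{(1)}\}$, the previous step shows that every $y \in I_2$ lies in $\bigcup_{k \in T} J_k^{(2)}$; since $T$ is finite, avoidance of $R_2$ yields a single $k \in T$ with $I_2 \subseteq J_k^{(2)}$. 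This index then satisfies both inclusions, so $I_1 \times I_2 \subseteq J_k$, completing the argument.

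The key insight, and the main obstacle, is this coordination step: avoidance must be applied first in one coordinate to cut down the feasible set of indices, and then in the other coordinate (restricted to that smaller set) in order to land on a common $k$. A single simultaneous application in each factor would not suffice, which is why the two-stage reduction via the sets $S(y)$ and $T$ is essential.
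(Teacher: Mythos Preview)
Your proof is correct and follows the same overall strategy as the paper: reduce by induction to two factors, use the product decomposition of ideals, introduce the set $T = \{k : I_1 \subseteq J_k^{(1)}\}$, show that $I_2 \subseteq \bigcup_{k \in T} J_k^{(2)}$, and finish with avoidance in $R_2$.

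The only notable difference is in how the key inclusion $I_2 \subseteq \bigcup_{k \in T} J_k^{(2)}$ is established. The paper argues by contradiction: assuming some $(a,b) \in I_1 \times I_2$ lies outside $\bigcup_{k \in T}(J_k^{(1)} \times J_k^{(2)})$, it uses avoidance in $R_1$ (in contrapositive form) to pick $c \in I_1 \setminus \bigcup_{k \notin T} J_k^{(1)}$, and then observes that $(c,b)$ escapes the entire cover. Your argument is direct: for each fixed $y \in I_2$ you apply avoidance in $R_1$ to the sub-cover indexed by $S(y)$, which immediately produces an index in $T \cap S(y)$. Both routes invoke avoidance of $R_1$ in an essential second way beyond merely knowing $T \ne \emptyset$; yours is arguably cleaner since it avoids the element-swapping contradiction, while the paper's version is slightly more constructive in that it exhibits an explicit witness to the failure of a bad cover.
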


\begin{proof}
By Corollary \ref{Corollary 1}, if $R$ is an avoidance ring then each $R_i$ is as well.
Conversely, by induction it suffices to consider a product of two rings.
Suppose the rings $R_{1}, R_{2}$ are avoidance rings.
It is well known that every ideal of $R_{1}\times R_{2}$ is precisely of the form $I \times J$ where $I$ (resp.
$J$) is an ideal of $R_{1}$ (resp.
$R_{2}$).
Let $I \times J$ be an ideal in $R_{1}\times R_{2}$ with $I \times J \subseteq \bigcup\limits_{i=1}^n (I_i \times J_i)$.
It follows that $I \subseteq \bigcup\limits_{i=1}^{n}I_i$.
Since $R_{1}$ is an avoidance ring, there exists at least one $i$ such that $I \subseteq I_i$.
By reordering we may assume there exists some $d$ with $1\leqslant d\leqslant n$ such that that $I \subseteq I_i$ for all $1\leqslant i\leqslant d$ and $I \not \subseteq I_i$ for all $d+1\leqslant i \leqslant n$.
Then we claim that $I\times J \subseteq \bigcup\limits_{i=1}^{d}I_{i}\times J_{i}$.
Indeed, suppose $I \times J \not \subseteq \bigcup\limits_{i=1}^d (I_i \times J_i)$, thus $d<n$.
Choose $(a,b) \in (I \times J) \setminus \bigcup\limits_{i=1}^d (I_i \times J_i)$.
Since each of $I_1, \ldots, I_d$ contains $I$ (and thus $a$), it follows that $b$ is not in any of $J_1, \ldots, J_d$.
Next, since $I$ is not contained in any of $I_{d+1}, \ldots, I_n$, there exists $c \in I \setminus \bigcup\limits_{i=d+1}^n I_i$ (since $R_{1}$ is an avoidance ring).
Then $(c,b) \in (I \times J) \setminus \bigcup_{i=1}^n (I_i \times J_i)$, contradiction.
This establishes the claim.
It follows that $J \subseteq \bigcup\limits_{i=1}^{d}J_{i}$.
Since $R_{2}$ is an avoidance ring, there exists $1\leqslant k\leqslant d$ such that $J \subseteq J_k$.
So $I \times J \subseteq I_k \times J_k$.
\end{proof}

The avoidance property is also preserved by certain infinite products.
In fact, every direct product of fields is an avoidance ring.
More generally, every direct product of absolutely flat rings is absolutely flat, and hence is an avoidance ring.
In this regard, see also Corollary \ref{Corollary infinite avoidance}.
\\

There are two minor mistakes in the formulation of \cite[Theorem 1.8]{Quartararo-Butts}: (i) the dimension of $V$ should be $\geqslant2$, and (ii) $M$ should be finitely generated.
We state the corrected form as follows:

\begin{lemma}\label{Lemma 3 Quartararo-Butts} Let $\mathfrak{m}$ be a maximal ideal of a ring $R$ such that $F=R/\mathfrak{m}$ is a finite field with $n-1$ elements ($n\geqslant3$) and $M$ a finitely generated $R$-module such that the $F$-vector space $V=M/\mathfrak{m}M$ has dimension $\geqslant2$.
Then there exist $n$ proper $R$-submodules $M_{1},\ldots, M_{n}$ of $M$ such that $M=\bigcup\limits_{i=1}^{n}M_{i}$.
\end{lemma}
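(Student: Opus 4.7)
The plan is to transfer the problem entirely to the $F$-vector space $V = M/\mathfrak{m}M$. If I can cover $V$ by $n$ proper $F$-subspaces $V_1,\ldots,V_n$, then setting $M_i := \pi^{-1}(V_i)$, where $\pi : M \to V$ is the canonical surjection, produces $R$-submodules of $M$ with the desired properties: each $M_i$ contains $\mathfrak{m}M$ and is stable under scalar multiplication by $R$ (because $V_i$ is $F$-stable); the $M_i$ cover $M$ since $\pi$ is surjective and the $V_i$ cover $V$; and each $M_i$ is proper in $M$ because otherwise $V = \pi(M) = \pi(M_i) \subseteq V_i$, contradicting the properness of $V_i$.

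The real content is therefore the combinatorial fact that any $F$-vector space of dimension $\geq 2$ over a finite field of size $q=n-1$ is a union of $n$ proper subspaces. To construct the covering, I would enumerate $F = \{\alpha_1, \ldots, \alpha_{n-1}\}$, pick $F$-linearly independent $\bar{x}_1, \bar{x}_2 \in V$ (available since $\dim_F V \geq 2$), and choose an $F$-linear complement $W$ so that $V = F\bar{x}_1 \oplus F\bar{x}_2 \oplus W$. Then set
\[
V_i := F(\bar{x}_1 + \alpha_i \bar{x}_2) + W \quad (1 \leq i \leq n-1), \qquad V_n := F\bar{x}_2 + W.
\]
Each $V_i$ has codimension $1$ in $V$, hence is proper. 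Given $v = a_1 \bar{x}_1 + a_2 \bar{x}_2 + w$ with $w \in W$: if $a_1 = 0$ then $v \in V_n$, while if $a_1 \neq 0$ then, taking the unique $j$ with $\alpha_j = a_2/a_1$, one has $a_1\bar{x}_1 + a_2\bar{x}_2 = a_1(\bar{x}_1 + \alpha_j \bar{x}_2)$, so $v \in V_j$. Hence $V = \bigcup_{i=1}^{n} V_i$.

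I do not anticipate any substantial obstacle: once the vector-space covering is in hand, the pullback step is immediate. It is worth noting that the finite-generation hypothesis on $M$ is not actually invoked in this argument; properness of the $M_i$ transfers from that of the $V_i$ through the surjectivity of $\pi$ alone, with no appeal to Nakayama. The hypothesis is presumably retained for uniformity with the applications of the lemma in the surrounding text.
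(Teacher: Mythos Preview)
Your proof is correct and follows essentially the same construction as the one the paper cites (Quartararo--Butts, Theorem~1.8): cover $V$ by the $n$ hyperplanes through a fixed codimension-$2$ subspace and pull back to $M$; the paper's hint about the submodules $E_{2+i}$ indexed by $F\setminus\{0\}=\{x_1,\ldots,x_{n-2}\}$ describes the same family up to relabeling (your $\alpha_i=0$ case is simply singled out as a separate $E_1$ or $E_2$). Your closing remark that the finite-generation hypothesis on $M$ is not actually invoked is also correct.
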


\begin{proof} See the proof of \cite[Theorem 1.8]{Quartararo-Butts}, taking into account that before the definition of $E_{2+i}$ we need to assume that $F\setminus\{0\}=\{x_{1},\ldots,x_{n-2}\}$, and moreover $q-1$ should be replaced with $n-2$.
\end{proof}

\begin{corollary}\cite[Corollary 1.9(ii)]{Quartararo-Butts}\label{Corollary 4 Q-B} A local ring $R$ with maximal ideal $\mathfrak{m}$ is an avoidance ring if and only if $R/\mathfrak{m}$ is infinite or $R$ is a B\'{e}zout ring.
\end{corollary}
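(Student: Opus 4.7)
The plan is to prove each direction separately. The backward direction splits into the B\'ezout case (immediate from earlier results) and the infinite residue field case (a vector-space cover argument plus Nakayama), while the forward direction is a contrapositive that packages Lemma \ref{Lemma 3 Quartararo-Butts}.

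For the backward direction, if $R$ is B\'ezout then Corollary \ref{cor:exsAvoidance}(i) gives that $R$ is avoidance. So assume $R/\m$ is infinite. By the criterion in the introduction it suffices to check avoidance on finitely generated ideals, so let $I$ be finitely generated with $I \subseteq \bigcup_{k=1}^{n} I_k$. Pass to the finite-dimensional $F := R/\m$-vector space $V = I/\m I$, and let $V_k$ be the image of $I \cap I_k$ in $V$. Each $V_k$ is an $F$-subspace, and since every $x \in I$ lies in some $I_k$, we have $V = \bigcup_{k=1}^{n} V_k$. A finite-dimensional vector space over an infinite field is never a finite union of proper subspaces, so $V_k = V$ for some $k$, i.e., $I = (I \cap I_k) + \m I$. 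Applying Nakayama's lemma to the finitely generated $R$-module $I/(I \cap I_k)$, which satisfies $\m \cdot I/(I \cap I_k) = I/(I \cap I_k)$, yields $I = I \cap I_k$, so $I \subseteq I_k$.

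For the forward direction I argue the contrapositive: suppose $R/\m$ is finite and $R$ is not B\'ezout, and produce an ideal failing avoidance. Choose a finitely generated ideal $I$ which is not principal. Since $R$ is local, the minimum number of generators of $I$ equals $\dim_F I/\m I$, so this dimension is at least $2$. Set $n := |F| + 1 \geq 3$, so that $|F| = n-1$. Lemma \ref{Lemma 3 Quartararo-Butts} supplies proper $R$-submodules $M_1, \ldots, M_n$ of $I$ whose union is $I$. Each $M_i$ is an $R$-submodule of the ideal $I$, hence is itself an ideal of $R$ strictly contained in $I$, so $I = \bigcup_{i=1}^{n} M_i$ with no $M_i$ containing $I$. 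Thus $I$ fails avoidance, and $R$ is not an avoidance ring.

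The main subtlety is in the infinite residue field case: we cannot directly invoke Corollary \ref{cor:exsAvoidance}(iii) because that assumes $R$ contains an infinite field as a subring, whereas here we only know $R/\m$ is infinite (this may fail for local rings of mixed characteristic, for instance). Bridging that gap via the vector-space cover lemma together with Nakayama's lemma is the central technical point. The forward direction, by contrast, is essentially a repackaging of Lemma \ref{Lemma 3 Quartararo-Butts} once one observes that "non-principal" translates to "$\dim_F I/\m I \geq 2$" in the local setting.
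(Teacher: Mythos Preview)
Your proof is correct. The forward direction matches the paper's argument exactly: both invoke Lemma~\ref{Lemma 3 Quartararo-Butts} on a non-principal finitely generated ideal, using the local Nakayama identification of the minimal number of generators with $\dim_{F}I/\mathfrak{m}I$.

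For the backward direction the paper simply cites \cite[Proposition~1.7]{Quartararo} for the infinite residue field case, whereas you supply the argument in full: pass to $V=I/\mathfrak{m}I$, use that a vector space over an infinite field cannot be a finite union of proper subspaces to get $I=(I\cap I_{k})+\mathfrak{m}I$, then apply Nakayama. This is almost certainly the content of the cited external reference, so the route is not genuinely different, but your version is self-contained and your remark about why Corollary~\ref{cor:exsAvoidance}(iii) does not apply directly (mixed characteristic local rings need not contain an infinite field) is a worthwhile clarification that the paper leaves implicit.
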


\begin{proof} The implication ``$\Rightarrow$" follows from Lemma \ref{Lemma 3 Quartararo-Butts} and the fact that over a local ring $R$, a finitely generated $R$-module $M$ has a minimal generating set with $d$ elements if and only if the $R/\mathfrak{m}$-vector space $M/\mathfrak{m}M$ has dimension $d$.
The reverse implication follows from \cite[Proposition 1.7]{Quartararo} and Corollary \ref{cor:exsAvoidance}(i) (every B\'{e}zout ring has avoidance).
\end{proof}

There is also a minor mistake in formulation of \cite[Theorem 2.5]{Quartararo-Butts}: assuming $n > 1$ is wrong, i.e. it is possible that for every maximal ideal of $R$, the localization $R_M$ is not a B\'{e}zout ring.
In other words, it may happen that for each maximal ideal $M$ of $R$, the field $R/M$ is infinite.
The corrected formulation reads as follows.

\begin{lemma} Assume a ring $R$ has finitely many maximal ideals such that for each maximal ideal $M$, the field $R/M$ is infinite or $R_M$ is a B\'{e}zout ring.
Then $R$ is an avoidance ring.
\end{lemma}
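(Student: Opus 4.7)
By \cite[Proposition 1.1]{Quartararo-Butts}, avoidance need only be verified on finitely generated ideals, so fix a finitely generated $I\subseteq\bigcup_{k=1}^{m}J_{k}$. Let $M_{1},\ldots,M_{n}$ denote the maximal ideals of $R$. By hypothesis together with Corollary \ref{Corollary 4 Q-B}, each $R_{M_{i}}$ is an avoidance ring. Localizing the cover at each $M_{i}$ and invoking local avoidance produces an index $k(i)\in\{1,\ldots,m\}$ with $I_{M_{i}}\subseteq(J_{k(i)})_{M_{i}}$; finite generation of $I$ then yields an element $s_{i}\in R\setminus M_{i}$ with $s_{i}I\subseteq J_{k(i)}$, equivalently $(J_{k(i)}:I)\not\subseteq M_{i}$.

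Since ideals of a semilocal ring are determined by their localizations at the finitely many maximal ideals, it would suffice to show that the local indices $k(i)$ can be chosen to coincide: if some common $k^{*}$ works, then $(J_{k^{*}}:I)\not\subseteq M_{i}$ for every $i$, hence $(J_{k^{*}}:I)=R$, i.e., $I\subseteq J_{k^{*}}$. The plan is to attempt this synchronization via a Chinese Remainder Theorem construction. Pairwise comaximality of the $M_{i}$'s gives elements $e_{i}\in R$ with $e_{i}\equiv 1\pmod{M_{i}}$ and $e_{i}\equiv 0\pmod{M_{j}}$ for $j\neq i$; the element $t:=\sum_{i}e_{i}s_{i}$ then satisfies $t\equiv s_{i}\pmod{M_{i}}$, hence lies outside every $M_{i}$ and so is a unit of $R$. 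Since $tI\subseteq\sum_{i}e_{i}J_{k(i)}\subseteq\sum_{k\in K}J_{k}$ where $K=\{k(1),\ldots,k(n)\}$, dividing by the unit $t$ gives the ideal-sum containment $I\subseteq\sum_{k\in K}J_{k}$.

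When $|K|=1$ this already yields the single-index conclusion $I\subseteq J_{k^{*}}$. The main obstacle is the case $|K|\geq 2$: converting the ideal-sum containment into a containment in a single $J_{k^{*}}$. To handle this, I would leverage the original set-theoretic cover $I\subseteq\bigcup_{k}J_{k}$ together with Theorem \ref{Theorem 7 McCoy}(ii), which — after passing to an irredundant subcover — yields $I^{d}\subseteq\bigcap_{k}J_{k}$ for some $d\geq 1$, so that reducing modulo $I^{d}$ makes $I$ nilpotent while preserving the hypothesis on the quotient. The hardest part will then be a case analysis designed to strictly decrease $|K|$: at maximal ideals where $R_{M_{i}}$ is B\'{e}zout, the ideal $I_{M_{i}}$ is principal, allowing manipulation via greatest common divisors of the $s_{i}$'s to reselect $k(i)$; at maximal ideals where $R/M_{i}$ is infinite, a Zariski-density argument in the spirit of \cite[Proposition 1.7]{Quartararo-Butts} — exploiting that a vector space over an infinite field is not a finite union of proper subspaces — supplies the reselection. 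Iterating ultimately forces $|K|=1$, yielding the desired containment.
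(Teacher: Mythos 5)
Your setup is sound as far as it goes: localization does commute with finite unions of ideals, Corollary \ref{Corollary 4 Q-B} does make each $R_{M_i}$ an avoidance ring, finite generation of $I$ does produce $s_i\in R\setminus M_i$ with $s_iI\subseteq J_{k(i)}$, and the observation that a common index $k^{*}$ with $(J_{k^{*}}:I)\not\subseteq M_i$ for all $i$ would finish the proof is correct. But everything after that is a plan, not a proof, and the plan does not close. Two concrete problems. First, the Chinese Remainder step produces nothing: the conclusion $I\subseteq\sum_{k\in K}J_k$ is already an immediate consequence of the hypothesis, since $\bigcup_k J_k\subseteq\sum_k J_k$; restricting the index set to $K$ adds no leverage toward containment in a single $J_{k^{*}}$, because a sum of ideals is far larger than any one of them. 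Second, the proposed descent on $|K|$ --- ``gcd manipulations'' at the B\'ezout maximal ideals, a ``Zariski-density argument'' at the infinite-residue-field ones, ``iterating ultimately forces $|K|=1$'' --- is precisely the nontrivial content of the lemma, and no argument is given for why any reselection of the $k(i)$ strictly decreases $|K|$, why the process terminates at $1$ rather than at some $|K|\geqslant 2$, or what passing to $R/I^{d}$ accomplishes. This is a genuine gap, not a presentational one: the synchronization of local witnesses is the whole theorem.

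For comparison, the paper does not reprove this lemma at all; it defers to the proof of \cite[Theorem 2.5]{Quartararo-Butts}. The substantive mechanism there is not a localize-then-reglue argument of the kind you sketch, but a direct induction on the number of ideals in the cover, exploiting at each step the dichotomy that either $R/M$ is infinite (so a vector space over $R/M$ is not a finite union of proper subspaces) or $I/MI$ is cyclic (which follows from $R_M$ B\'ezout plus finite generation of $I$, and cyclic modules have avoidance); this is the content of \cite[Propositions 1.1 and 1.7]{Quartararo-Butts}. If you want a self-contained proof, that induction is the step you must actually supply.
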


\begin{proof} See the proof of \cite[Theorem 2.5]{Quartararo-Butts}.
\end{proof}

Note that Corollary \ref{Corollary 4 Q-B} holds more generally.
In fact, the following result is the culmination of avoidance rings.

\begin{theorem}\cite[Theorem 2.6]{Quartararo-Butts}\label{Theorem 6 Q-B} For a ring $R$ the following assertions are equivalent.
\\
$\mathbf{(i)}$ $R$ is an avoidance ring.
\\
$\mathbf{(ii)}$ Every finitely generated ideal of $R$ has avoidance.
\\
$\mathbf{(iii)}$ $R_{\mathfrak{p}}$ is an avoidance ring for all $\mathfrak{p}\in\Spec(R)$.
\\
$\mathbf{(iv)}$ $R_{M}$ is an avoidance ring for all $M\in\Max(R)$.
\\
$\mathbf{(v)}$ For each maximal ideal $M$ of $R$, the field $R/M$ is infinite or $R_M$ is a B\'{e}zout ring.
\end{theorem}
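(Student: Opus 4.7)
Proof plan. I will establish the equivalences via the scheme (i) $\Leftrightarrow$ (ii), (i) $\Rightarrow$ (iii) $\Rightarrow$ (iv) $\Leftrightarrow$ (v), and (iv) $\Rightarrow$ (i). Most of these are immediate from results already in hand and only one direction carries real content.

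The simple steps first. The equivalence (i) $\Leftrightarrow$ (ii) is the opening remark of the introduction (Proposition~1.1 of \cite{Quartararo-Butts}). The chain (i) $\Rightarrow$ (iii) $\Rightarrow$ (iv) follows from Corollary~\ref{Corollary 1} (localizations of an avoidance ring are avoidance) together with the fact that every maximal ideal is prime. The equivalence (iv) $\Leftrightarrow$ (v) is obtained by applying Corollary~\ref{Corollary 4 Q-B} to each local ring $R_M$, whose residue field is canonically identified with $R/M$.

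The substantive step is (iv) $\Rightarrow$ (i). By (ii) it suffices to consider a finitely generated ideal $I$ with $I \subseteq \bigcup_{k=1}^{n} I_{k}$ and prove $I \subseteq I_{k}$ for some $k$. Introduce the colon ideals $J_{k} := (I_{k} :_{R} I)$, so that $J_{k}=R$ is equivalent to $I \subseteq I_{k}$. Since $I$ is finitely generated, the containment $I_{\mathfrak p} \subseteq (I_{k})_{\mathfrak p}$ in $R_{\mathfrak p}$ is equivalent to $J_{k} \not\subseteq \mathfrak p$. Applying (iv) at each maximal ideal $M$ yields an index $k(M)$ with $I_{M} \subseteq (I_{k(M)})_{M}$, hence $J_{k(M)} \not\subseteq M$. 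Consequently $\sum_{k=1}^{n} J_{k}$ is contained in no maximal ideal, so $\sum_{k=1}^{n} J_{k} = R$.

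The hard part is to upgrade $\sum J_{k}=R$ to $J_{k}=R$ for some single $k$. For this I would first discard redundant ideals so the cover is minimal, and then invoke Theorem~\ref{Theorem 7 McCoy}(ii) to obtain a power $d$ with $I^{d} \subseteq \bigcap_{k} I_{k}$, i.e. $I^{d-1} \subseteq \bigcap_{k} J_{k}$. Reducing modulo $I^{d}$ — which preserves hypothesis (iv), since each $(R/I^{d})_{\bar M}$ is a quotient of $R_{M}$ and hence avoidance — places us in the nilpotent setting where the disjunctive information in (v) is essential: at maximal ideals with infinite residue field, Lemma~\ref{Lemma 3 Quartararo-Butts} forbids proper finite unions covering a finitely generated module with $\dim_{R/M}(M/\mathfrak m M) \geq 2$, while at maximal ideals where $R_M$ is B\'{e}zout the ideal $I_M$ is forced to be principal. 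Matching these local data against the minimal cover $\{I_{k}\}$ produces the required contradiction, following the bookkeeping in the proof of \cite[Theorem~2.6]{Quartararo-Butts}; this gluing of the local choices $k(M)$ into a global index is the principal technical obstacle.
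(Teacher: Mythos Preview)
Your scheme matches the paper's: both dispatch (i)$\Leftrightarrow$(ii) and (i)$\Rightarrow$(iii)$\Rightarrow$(iv) as easy, and both ultimately refer the substantive implication back to \cite{Quartararo-Butts}. The paper routes through (iv)$\Rightarrow$(v)$\Rightarrow$(i), whereas you obtain (iv)$\Leftrightarrow$(v) directly from Corollary~\ref{Corollary 4 Q-B} (using $R_M/MR_M\cong R/M$) and then sketch (iv)$\Rightarrow$(i); this is only a cosmetic difference, and your colon-ideal reduction $\sum_k (I_k:I)=R$ is a clean way to localize the problem.

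One correction to your sketch of the hard step: you invoke Lemma~\ref{Lemma 3 Quartararo-Butts} as the tool that ``forbids proper finite unions'' at maximal ideals with \emph{infinite} residue field, but that lemma is stated for \emph{finite} residue fields and runs in the opposite direction---it \emph{produces} a cover by $n$ proper submodules. The fact you actually need at infinite residue fields is \cite[Proposition~1.7]{Quartararo-Butts} (a module over a ring containing an infinite field is not a finite union of proper submodules); Lemma~\ref{Lemma 3 Quartararo-Butts} is used contrapositively in the proof of Corollary~\ref{Corollary 4 Q-B}, not in the gluing argument you outline. Since you explicitly defer the final bookkeeping to \cite[Theorem~2.6]{Quartararo-Butts} anyway, this misattribution does not create a genuine gap, but it should be fixed.
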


\begin{proof} The equivalence (i)$\Leftrightarrow$(ii) and the implications (i)$\Rightarrow$(iii)$\Rightarrow$(iv) are easy.
But the implications (iv)$\Rightarrow$(v)$\Rightarrow$(i) are nontrivial, for the details see \cite[Theorem 2.6]{Quartararo-Butts}, taking into account that if $M$ is a maximal ideal of a ring $R$ and $S \subseteq R$ a multiplicative subset with $M\cap S=\emptyset$, then $S^{-1}R/S^{-1}M\cong S^{-1}(R/M)$ is a field extension of $R/M$ and we have a canonical ring isomorphism $(S^{-1}R)_{S^{-1}M}\cong R_M$.
\end{proof}

By the above theorem, the avoidance is a local property. In particular, every Krull domain is an avoidance ring. 

\begin{corollary}\label{Corollary infinite avoidance} Let $\{ R_{i} \}$ be an arbitrary family of avoidance rings. Then $R = \prod\limits_{i}R_{i}$ is an avoidance ring if and only if $R$ modulo the ideal
$I = \bigoplus\limits_{i}R_{i}$ is an avoidance ring.
\end{corollary}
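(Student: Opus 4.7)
The forward direction is immediate from Corollary \ref{Corollary 1}, since $R/I$ is a quotient of $R$. For the converse, my plan is to verify the local criterion of Theorem \ref{Theorem 6 Q-B}(iv): it suffices to show that $R_M$ is an avoidance ring for every maximal ideal $M$ of $R$. The key is to exploit the idempotent structure of $R = \prod_i R_i$. For each index $j$, let $e_j \in R$ be the idempotent supported at $j$; then $I$ is generated as an ideal by the family $\{e_j\}$, since any element of $I$ has finite support. Because $e_j(1 - e_j) = 0 \in M$, exactly one of $e_j, 1 - e_j$ lies in $M$.

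I would then split into two cases. In Case 1, some $e_{j_0} \notin M$; then $1 - e_{j_0} \in M$, and since $(1 - e_{j_0})R = \ker(\pi_{j_0} \colon R \to R_{j_0})$, the ideal $M$ corresponds to a maximal ideal $M_{j_0}$ of $R_{j_0}$. Since $e_{j_0}$ is a unit in $R_M$ and $e_{j_0}(1 - e_{j_0}) = 0$, we get $(1 - e_{j_0})R_M = 0$, whence $R_M \cong (R_{j_0})_{M_{j_0}}$, which is an avoidance ring by the hypothesis on $R_{j_0}$ together with Theorem \ref{Theorem 6 Q-B}. In Case 2, every $e_j$ lies in $M$, i.e.\ $I \subseteq M$; then each $1 - e_j$ is a unit in $R_M$, forcing $e_j = 0$ in $R_M$, so $IR_M = 0$. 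Hence $R_M \cong R_M/IR_M \cong (R/I)_{M/I}$, which is avoidance by the hypothesis on $R/I$ and Theorem \ref{Theorem 6 Q-B}.

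The only conceptually delicate step is the clean identification $R_M \cong (R_{j_0})_{M_{j_0}}$ in Case 1, and the analogous $R_M \cong (R/I)_{M/I}$ in Case 2; both reduce to the observation that a suitable idempotent becomes $0$ or a unit after localizing at $M$, so I do not anticipate any serious obstacle beyond careful bookkeeping of the idempotent structure.
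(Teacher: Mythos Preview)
Your proposal is correct and follows essentially the same approach as the paper: both split on whether $I\subseteq M$ or not, use the canonical idempotents $e_j$ to identify $R_M$ with either $(R/I)_{M/I}$ or $(R_{j_0})_{M_{j_0}}$, and then invoke Theorem~\ref{Theorem 6 Q-B}. The only cosmetic difference is that you verify condition~(iv) of Theorem~\ref{Theorem 6 Q-B} directly, whereas the paper verifies condition~(v), which is a slightly more roundabout route to the same conclusion.
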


\begin{proof} The implication ``$\Rightarrow$" follows from Corollary \ref{Corollary 1}. Conversely, it suffices to show that $R$ satisfies Theorem \ref{Theorem 6 Q-B}(v).
Let $M$ be a maximal ideal of $R$.
First, suppose $I\subseteq M$.
Then by Theorem \ref{Theorem 6 Q-B}, the field $R/M$ is infinite or $(R/I)_{M/I}\cong(R/I)_{M}\cong R_{M}/IR_{M}$ is a B\'{e}zout ring.
Now $I$ is generated by the canonical idempotents $e_{k}=(\delta_{i,k})$ where $\delta_{i,k}$ is the Kronecker delta, so its extension $IR_{M}$ is generated by the elements $e_{k}/1$.
Since each $1-e_{k}\in R\setminus M$, one has $e_{k}/1=0$, thus $IR_{M}=0$, hence $R_M \cong (R/I)_{M/I}$.

Next, suppose $I \not \subseteq M$.
Then there exists some index $k$ such that the idempotent $e_{k} \in I \setminus M$.
Thus $1-e_{k}\in M$.
It follows that $M=\prod\limits_{i}M_{i}$ where $M_{k}=\pi_{k}(M)$ is a maximal ideal of $R_{k}$ and $M_{i}=R_{i}$ for all $i\neq k$ (here $\pi_{k}:R\twoheadrightarrow R_{k}$ is the canonical projection).
Now if $R/M\cong R_{k}/M_{k}$ is a finite field, then using Theorem \ref{Theorem 6 Q-B}, we have $R_{M}\cong(R_{k})_{M_{k}}$ is a B\'{e}zout ring.
\end{proof}


By an \emph{overring} of a ring $R$ we mean a subring of $T(R)$ which contains $R$.
All the claims in \cite[\S3]{Quartararo-Butts} have been proved in Quartararo's thesis \cite{Quartararo}, except the following one:

\begin{theorem}\label{Theorem QBB} Let $S$ be an overring of an integral domain $R$.
If $R$ is an avoidance ring, then $S$ is an avoidance ring.
\end{theorem}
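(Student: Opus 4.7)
The plan is to verify condition (v) of Theorem \ref{Theorem 6 Q-B} for $S$, namely that for every maximal ideal $N$ of $S$, either the residue field $S/N$ is infinite or the localization $S_N$ is a B\'{e}zout ring. Since $R$ is a domain, $T(R)$ is its fraction field, and an overring $S$ satisfies $R \subseteq S \subseteq T(R)$.

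Fix a maximal ideal $N$ of $S$ and set $\mathfrak{p} := N \cap R$, a prime (but not necessarily maximal) ideal of $R$. Every element of $R \setminus \mathfrak{p}$ lies outside $N$ and hence becomes a unit in $S_N$, so we obtain inclusions $R_{\mathfrak{p}} \subseteq S_N \subseteq T(R)$. In particular, $S_N$ is itself an overring of the local avoidance ring $R_{\mathfrak{p}}$ (avoidance of $R_{\mathfrak{p}}$ follows from Corollary \ref{Corollary 1}). Because $R_{\mathfrak{p}}$ is local, Corollary \ref{Corollary 4 Q-B} splits into two cases: either the residue field $R_{\mathfrak{p}}/\mathfrak{p} R_{\mathfrak{p}}$ is infinite, or $R_{\mathfrak{p}}$ is a B\'{e}zout domain.

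In the first case, the natural inclusion $R/\mathfrak{p} \hookrightarrow S/N$ together with the fact that $S/N$ is a field forces the fraction field of $R/\mathfrak{p}$, namely $R_{\mathfrak{p}}/\mathfrak{p} R_{\mathfrak{p}}$, to embed into $S/N$; thus $S/N$ is infinite. In the second case, a local B\'{e}zout domain is necessarily a valuation domain, and any ring between a valuation domain and its fraction field is again a valuation domain (for each nonzero $x \in T(R)$, at least one of $x, x^{-1}$ already belongs to $R_{\mathfrak{p}}$, hence to $S_N$); so $S_N$ is a valuation domain and in particular B\'{e}zout.

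Once the correct criterion is invoked, the argument is essentially mechanical. The two steps that deserve verification are that $R_{\mathfrak{p}} \subseteq S_N$ remains valid even when $\mathfrak{p}$ is not maximal in $R$ (which is what allows us to reduce to the local avoidance dichotomy without worrying whether $N$ contracts to a maximal ideal), and the classical fact that an overring of a valuation domain inside its fraction field is again a valuation domain. Neither is a genuine obstacle, so there is no hard technical core: the work is in assembling Theorem \ref{Theorem 6 Q-B}, Corollary \ref{Corollary 1}, and Corollary \ref{Corollary 4 Q-B} in the right order.
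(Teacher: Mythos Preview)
Your proof is correct and follows essentially the same route as the paper: verify condition (v) of Theorem~\ref{Theorem 6 Q-B} by sandwiching $S_N$ between $R_{\mathfrak{p}}$ and the fraction field and using that $R_{\mathfrak{p}}$ is a local avoidance ring. The only cosmetic differences are that the paper first assumes $S/N$ is finite and uses the ``finite domain $\Rightarrow$ field'' trick to force $\mathfrak{p}$ to be maximal in $R$ (which you neatly avoid by localizing immediately), and that the paper cites the general fact that overrings of B\'{e}zout domains are B\'{e}zout, whereas you reduce to the more elementary valuation-domain case.
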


\begin{proof} It suffices to show that $S$ satisfies Theorem \ref{Theorem 6 Q-B}(v).
Let $M$ be a maximal ideal of $S$.
If $S/M$ is a finite field, consider the ring extension $R/\mathfrak{m}\subseteq S/M$ with $\mathfrak{m}:=M\cap R$.
Then $R/\mathfrak{m}$ is a field, because every finite integral domain is a field, hence $\mathfrak{m}$ is a maximal ideal of $R$, so by hypothesis and Theorem \ref{Theorem 6 Q-B}, $R_{\mathfrak{m}}$ is a B\'{e}zout domain.
To conclude the assertion it suffices to show that $S_{M}$ is a B\'{e}zout domain.
Since $R$ is an integral domain, we have the canonical ring extensions $R_{\mathfrak{m}}\subseteq S_{M}\subseteq K$ where $K = T(R) = T(R_{\mathfrak{m}})$.
Thus $S_{M}$ is an overring of $R_{\mathfrak{m}}$, and it is well known that every overring of a B\'{e}zout domain is a B\'{e}zout domain (see \cite{Beauregard}).
\end{proof}

Recall that an ideal $I$ of a ring $R$ is called a \emph{multiplication ideal} if every ideal of $R$ contained in $I$ is of the form $IJ$ for some ideal $J$ of $R$.
If every ideal of a ring $R$ is a multiplication ideal then $R$ is called a \emph{multiplication ring}.
For example, every PIR is a multiplication ring.
Every pure ideal of a ring is also a multiplication ideal, since if $J$ is an ideal contained in a pure ideal $I$, then $J=IJ$.
In particular, every absolutely flat ring is a multiplication ring.
Multiplication ideals (and rings) have been studied in the literature over the past decades. In this regard, we obtain the following results.

\begin{corollary}\label{Corollary 6 multp} If every finitely generated ideal of an integral domain $R$ is a multiplication ideal, then $R$ is an avoidance ring.
\end{corollary}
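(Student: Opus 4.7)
The plan is to invoke Theorem \ref{Theorem 6 Q-B}(ii), which says that a ring is an avoidance ring as soon as every finitely generated ideal has avoidance. So it suffices to take a finitely generated ideal $I$ of $R$ and show that $I$ has avoidance. If $I=0$, the statement is trivial (the zero ideal is principal, hence has avoidance), so assume $I\neq 0$.

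Next I would observe that, because $R$ is an integral domain, every nonzero ideal is faithful: if $rI=0$ with $I\ni a\neq 0$, then $ra=0$ forces $r=0$. Combining faithfulness with the hypothesis that $I$ is a finitely generated multiplication ideal, I would invoke the classical theorem (due essentially to D.D. Anderson, and appearing in the literature on multiplication ideals) that \emph{every finitely generated faithful multiplication ideal is invertible}. Once $I$ is known to be invertible, Corollary \ref{Corollary 5 fraction} applies verbatim and gives that $I$ has avoidance.

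The step I expect to be the main (and really only) obstacle is the invocation of the result that a finitely generated faithful multiplication ideal is invertible, which should be cited from the standard literature rather than reproved here. Everything else — the reduction to finitely generated ideals, the faithfulness from the domain hypothesis, and the application of Corollary \ref{Corollary 5 fraction} — is immediate from results already established in the excerpt. An alternative route, which avoids citing the invertibility theorem, would be to apply Theorem \ref{Theorem 7 McCoy}(iii) directly to obtain $I^{d}\subseteq I_{k}$ for some $k$, then use the multiplication property to write $I^{d}=IJ$ and try to descend in $d$; but this seems harder to push through cleanly than simply citing invertibility and referring to Corollary \ref{Corollary 5 fraction}.
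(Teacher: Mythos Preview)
Your proposal is correct, but it takes a detour compared with the paper's argument. The paper does not pass through invertibility at all: after reducing to a nonzero finitely generated ideal $I$ and writing $I=\bigcup_{k=1}^{n}I_{k}$, it uses the multiplication hypothesis directly to write each $I_{k}=IJ_{k}$ for some ideal $J_{k}$, observes that $I$ is faithful since $R$ is a domain, and then applies Lemma~\ref{Lemma 4 Q-B} to conclude $J_{k}=R$ (hence $I=I_{k}$) for some $k$. This is essentially the \emph{same} one-line computation that underlies Corollary~\ref{Corollary 5 fraction}, but run with the multiplication property in place of invertibility, so no outside citation is needed. Your route---proving $I$ is invertible via the Anderson-type result and then quoting Corollary~\ref{Corollary 5 fraction}---works, but it imports a structural theorem from the multiplication-ideal literature that is strictly stronger than what is required, whereas the paper's proof stays entirely within the results already established in the article.
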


\begin{proof} It suffices to show that every nonzero finitely generated ideal $I$ of $R$ has avoidance.
If $I=\bigcup\limits_{k=1}^{n}I_{k}$ then each $I_{k}=IJ_{k}$ for some ideal $J_{k}$ of $R$.
But $I$ is faithful, since $R$ is an integral domain.
Thus by Lemma \ref{Lemma 4 Q-B}, $J_{k}=R$ and so $I=I_{k}$ for some $k$.
\end{proof}

\begin{corollary} Every multiplication domain is an avoidance ring.
\end{corollary}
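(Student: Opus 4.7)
The plan is to deduce this as an immediate consequence of Corollary \ref{Corollary 6 multp}. By definition, a multiplication domain is an integral domain $R$ in which every ideal is a multiplication ideal; in particular, every finitely generated ideal of $R$ is a multiplication ideal. The hypotheses of Corollary \ref{Corollary 6 multp} are therefore satisfied, and the conclusion that $R$ is an avoidance ring follows at once.

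Since the argument reduces to a single invocation of the preceding corollary, there is no real obstacle to overcome; the substantive content has already been developed earlier. Specifically, Corollary \ref{Corollary 6 multp} takes any nonzero finitely generated ideal $I$ with a finite covering $I = \bigcup_{k=1}^n I_k$, rewrites each $I_k$ as $I J_k$ using the multiplication property, and applies Lemma \ref{Lemma 4 Q-B} to the faithful finitely generated $R$-module $I$ (faithfulness following from $I$ containing a nonzerodivisor in the domain $R$) to conclude that some $J_k = R$, whence $I_k = I$. Thus the only remark needed in the present proof is that the hypothesis ``every ideal is a multiplication ideal'' is \emph{a fortiori} stronger than ``every finitely generated ideal is a multiplication ideal,'' so that Corollary \ref{Corollary 6 multp} indeed applies.
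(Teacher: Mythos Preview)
Your proof is correct and follows exactly the paper's approach: the paper's own proof is simply ``It is an immediate consequence of Corollary \ref{Corollary 6 multp}.'' Your additional recap of why the hypotheses of that corollary are met is accurate but not strictly necessary.
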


\begin{proof} It is an immediate consequence of Corollary \ref{Corollary 6 multp}.
\end{proof}

Motivated by the definition of a multiplication ideal, by a \emph{multiplication module} we mean a module $M$ over a ring $R$ such that every $R$-submodule of $M$ is of the form $IM$ for some ideal $I$ of $R$. For example, every invertible fractional ideal is a multiplication module.

\begin{corollary}\cite[Proposition 11]{Gottlieb} Every finitely generated faithful multiplication module has avoidance.
\end{corollary}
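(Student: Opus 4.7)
The plan is to reduce directly to Lemma \ref{Lemma 4 Q-B} by exploiting the multiplication property of $M$. Let $M$ be a finitely generated faithful multiplication $R$-module, and suppose $M = N_1 \cup \cdots \cup N_n$ is expressed as a finite union of $R$-submodules. This is exactly the hypothesis in the definition of avoidance applied to $M$ itself; the ``covering'' and ``equality'' formulations agree here because each $N_k$ appearing in a covering of $M$ is automatically a submodule of $M$.

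Next, since $M$ is a multiplication module, each $N_k$ can be written as $N_k = I_k M$ for some ideal $I_k$ of $R$. Substituting yields $M = \bigcup_{k=1}^n I_k M$.

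At this point the hypotheses of Lemma \ref{Lemma 4 Q-B} are fulfilled verbatim: $M$ is finitely generated and faithful, and is covered by the finite family $\{I_k M\}_{k=1}^n$. The lemma then produces an index $k$ with $I_k = R$, and hence $N_k = I_k M = M$, which establishes avoidance for $M$.

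I do not foresee any real obstacle: the proof is essentially a one-step reduction, in which the multiplication structure of $M$ converts an arbitrary finite cover by submodules into a cover of the form $\bigcup I_k M$, and Lemma \ref{Lemma 4 Q-B} then finishes the job.
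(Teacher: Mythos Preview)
Your argument is correct and is exactly the approach of the paper, which simply states that the corollary is an immediate consequence of Lemma~\ref{Lemma 4 Q-B}. You have written out precisely the details the paper leaves implicit: using the multiplication hypothesis to rewrite each $N_k$ as $I_kM$ and then invoking the lemma.
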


\begin{proof} It is an immediate consequence of Lemma \ref{Lemma 4 Q-B}.
\end{proof}

We observed that every PIR is an avoidance ring.
In the presence of some finiteness assumptions, the converse can hold as well:

\begin{proposition}\cite[\S3]{Quartararo-Butts}
Let $R$ be a finite ring.
Then $R$ is an avoidance ring if and only if $R$ is a PIR.
\end{proposition}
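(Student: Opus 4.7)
The reverse direction is already noted in the excerpt (every PIR is an avoidance ring, as a consequence of Corollary~\ref{cor:exsAvoidance}(i)). For the forward direction, suppose $R$ is a finite avoidance ring; the plan is to show $R$ is a PIR by passing to its local factors and invoking the structural results previously established.

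First, since $R$ is finite it is Artinian, so by the usual structure theorem it decomposes as a finite product $R \cong \prod_{i=1}^{n} R_{i}$ of finite local rings $(R_{i},\mathfrak{m}_{i})$. By Theorem~\ref{Theorem 4}, each factor $R_{i}$ is an avoidance ring. Next, for each $i$ the residue field $R_{i}/\mathfrak{m}_{i}$ is a quotient of the finite ring $R_{i}$, hence is finite. Applying Corollary~\ref{Corollary 4 Q-B} to the local avoidance ring $R_{i}$, the only remaining possibility is that $R_{i}$ is a B\'{e}zout ring.

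Now $R_{i}$ is finite, so every ideal of $R_{i}$ is finitely generated; combined with the B\'{e}zout property, this means every ideal of $R_{i}$ is principal, i.e.\ $R_{i}$ is a PIR. Finally, since ideals of the product $R = \prod_{i=1}^{n} R_{i}$ are precisely of the form $\prod_{i=1}^{n} I_{i}$ for ideals $I_{i} \subseteq R_{i}$, if $I_{i} = (a_{i})$ for each $i$ then $\prod_{i=1}^{n} I_{i}$ is generated in $R$ by the single element $(a_{1},\ldots,a_{n})$. Hence $R$ is a PIR.

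The only potentially subtle step is the invocation of Corollary~\ref{Corollary 4 Q-B} on each local factor, since that corollary already does the real work of combining Lemma~\ref{Lemma 3 Quartararo-Butts} with the dimension-$\geqslant 2$ obstruction to avoidance over a local ring with finite residue field; once that is in hand, the rest is a routine assembly using the product decomposition of Artinian rings and Theorem~\ref{Theorem 4}.
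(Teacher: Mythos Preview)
Your proof is correct, but it takes a considerably heavier route than the paper's. The paper argues the contrapositive of the forward direction in one line: if $R$ is finite and not a PIR, pick a nonprincipal ideal $I$; since $R$ is finite, $I = \bigcup_{x \in I} Rx$ is a \emph{finite} union of principal ideals, each proper in $I$ (otherwise $I$ would be principal), so $I$ fails avoidance. No structure theory is needed, and the argument does not invoke Theorem~\ref{Theorem 4}, Corollary~\ref{Corollary 4 Q-B}, or the Artinian decomposition.

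Your approach, by contrast, reduces to the local case via the Artinian structure theorem and then feeds each local factor into Corollary~\ref{Corollary 4 Q-B}. This is a legitimate and fully correct alternative, and it has the virtue of showing explicitly how the result slots into the general framework already built up (local avoidance characterization plus stability under finite products). The cost is that you are importing the substantial content of Lemma~\ref{Lemma 3 Quartararo-Butts} and Theorem~\ref{Theorem 4} to prove something that follows from the bare definition once one notices that a finite ideal is literally a finite union of its cyclic subideals.
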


\begin{proof}
Suppose $R$ is not a PIR, and let $I \subseteq R$ be a nonprincipal ideal.
Then $I = \bigcup\limits_{x \in I} Rx$ is a finite union of proper principal ideals, so $R$ is not an avoidance ring.
\end{proof}

\begin{example} \label{ex:nonavoidance} It is useful to have a concrete example of a ring that is not an avoidance ring.
Let $K$ be a finite field.
Then the ring $K[x,y]/(x^2,xy,y^2)$ is a typical example of a finite ring with a nonprincipal ideal $I=(x,y)$, hence it is not an avoidance ring.
More generally, let $X=\{x_{i}: i\in S\}$ be any set of indeterminates over $K$ with $|S|\geqslant2$.
Consider $R=K[X]/\mathfrak{m}^{2}$ where $\mathfrak{m}=(x_{i}: i\in S)$  is the homogeneous maximal ideal of $K[X]$, and the nonprincipal ideal $(x_{1},x_{2}) \subseteq R$, which can be written as a finite union of principal ideals
$(x_1, x_2) = \bigcup\limits_{a,b \in K} (ax_{1}+bx_{2})$.
Hence, $R$ is not an avoidance ring. This example also shows that there are prime ideals which do not have avoidance.
\end{example}

\begin{example} \label{ex:nonavoidance2}  Here we give an example of a ring which is not an avoidance ring, although every prime ideal has avoidance.
For $K$ a field, consider the ring
\[
K[x^{1/2^\infty}] := K[x, x^{1/2}, x^{1/4}, \ldots] \cong K[x_1, x_2, \ldots]/(x_i - x_{i+1}^2 \mid i \geqslant 1).
\]
Let $S := K[x^{1/2^\infty}]/(x^2)$, and $R_n := S^{\otimes n} = S \otimes_K \ldots \otimes_K S$ for any $n \in \N$.
Then $R_n$ has only one prime ideal $\m$, which moreover satisfies $\m^2 = \m \ne 0$ and $R_n/\m \cong K$ (so that $\m$ has avoidance by Corollary \ref{Lemma 2 McCoy}): this can be seen from the presentation
\[
R_n \cong K[x_1, x_2, \ldots]/(x_1^2, \ldots, x_n^2, x_i - x_{i+n}^2 \mid i \geqslant 1).
\]
Note that $R_n$ is B\'ezout if and only if $n = 1$ (for $n > 1$, $(x_1, x_2)$ is a nonprincipal ideal).
In particular, if $K$ is finite, then by Theorem \ref{Theorem 6 Q-B}, $R_2$ is not an avoidance ring.
\end{example}

\begin{remark}\label{Remark 1 new} Let $K$ be a finite field.
Then the polynomial ring $K[x_{1},\ldots,x_{n}]$ is not an avoidance ring for all $n\geqslant2$, because its quotient modulo $\mathfrak{m}^{2}$ fails to have this property where $\mathfrak{m}=(x_{1},\ldots,x_{n})$.
Thus the avoidance property is not preserved by adjoining indeterminates.
For example, $K[x]$ is an avoidance ring, since it is a PID, but $K[x,y]$ is not an avoidance ring.
Furthermore, the avoidance property of rings need not pass to subrings or extensions.
As an example, for the ring $K[x,y]$, its subring $K$ and its field of fractions $K(x,y)$ both are avoidance rings.
The avoidance property need not be preserved by tensor products: take e.g.
$K[x,y]\cong K[x] \otimes_{K} K[y]$.
Finally, even if the quotient $R/\mathfrak{p}$ is an avoidance domain for each prime ideal $\mathfrak{p}$, then $R$ need not be an avoidance ring.
For instance, take the zero dimensional ring in Example \ref{ex:nonavoidance}.
\end{remark}

By a \emph{module-avoidance ring} (called
\emph{um-ring} in \cite{Quartararo-Butts}) we mean a ring $R$ such that every $R$-module has avoidance.
In \cite[Theorem 2.3]{Quartararo-Butts} one can find interesting characterizations for module-avoidance rings.
We slightly improve this result by adding the following equivalences.

\begin{proposition}\label{Proposition 2 $um$-ring} For a ring $R$ the following assertions are equivalent.
\\
$\mathbf{(i)}$ $R$ is a module-avoidance ring.
\\
$\mathbf{(ii)}$ Every $R$-algebra is an avoidance ring.
\\
$\mathbf{(iii)}$ Every ring extension of $R$ is an avoidance ring.
\\
$\mathbf{(iv)}$ Every finitely generated $R$-module has avoidance.
\end{proposition}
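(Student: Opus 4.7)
The plan is to prove the cycle (i) $\Rightarrow$ (ii) $\Rightarrow$ (iii) $\Rightarrow$ (i) together with the equivalence (i) $\Leftrightarrow$ (iv), with the latter handled by a standard reduction to the finitely generated case.

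For (i) $\Rightarrow$ (ii), I would argue as follows. Let $S$ be an $R$-algebra and let $J, J_1, \ldots, J_n$ be ideals of $S$ with $J \subseteq \bigcup\limits_{k=1}^{n} J_k$. All of these are $R$-submodules of $S$. By (i), the $R$-module $J$ has avoidance, so the submodule reformulation noted just after Lemma \ref{Lemma 4 Q-B} (applied to $J$ inside the ambient $R$-module $S$) yields $J \subseteq J_k$ for some $k$. Hence $S$ is an avoidance ring. The implication (ii) $\Rightarrow$ (iii) is immediate, since any ring extension of $R$ is automatically an $R$-algebra via the inclusion.

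The main step is (iii) $\Rightarrow$ (i), and my approach is to invoke the idealization (trivial extension) $R \ltimes M$, whose underlying $R$-module is $R \oplus M$ with multiplication $(r,m)(r',m') = (rr', rm' + r'm)$. This is a commutative ring with identity $(1,0)$ which contains $R$ as a subring via $r \mapsto (r,0)$, hence is a ring extension of $R$. The key observation is that the $R$-submodules $N$ of $M$ correspond bijectively to the ideals of $R \ltimes M$ contained in $0 \oplus M$, via $N \mapsto 0 \oplus N$. Now if $M = \bigcup\limits_{k=1}^{n} M_k$ for submodules $M_k \subseteq M$, then $0 \oplus M = \bigcup\limits_{k=1}^{n}(0 \oplus M_k)$ inside $R \ltimes M$. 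By (iii), $R \ltimes M$ is an avoidance ring, so the ideal $0 \oplus M$ equals $0 \oplus M_k$ for some $k$, giving $M = M_k$.

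Finally, (i) $\Rightarrow$ (iv) is trivial. For (iv) $\Rightarrow$ (i), given an arbitrary $R$-module $M$ with $M = \bigcup\limits_{k=1}^{n} M_k$, suppose for contradiction that $M \neq M_k$ for every $k$ and pick a witness $m_k \in M \setminus M_k$. The submodule $N := Rm_1 + \cdots + Rm_n$ is finitely generated and satisfies $N = \bigcup\limits_{k=1}^{n}(N \cap M_k)$. By (iv), avoidance of $N$ forces $N \subseteq M_k$ for some $k$, contradicting $m_k \in N \setminus M_k$. The only conceptually nontrivial ingredient is the idealization trick in (iii) $\Rightarrow$ (i), which is exactly the device that packages module-avoidance data as ring-avoidance data; once it is in hand, the remaining implications are bookkeeping.
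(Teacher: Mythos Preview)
Your proof is correct and follows the same approach as the paper: the implications (i)$\Rightarrow$(ii)$\Rightarrow$(iii) and (i)$\Leftrightarrow$(iv) are declared ``clear'' there (your arguments make explicit what the authors leave implicit), and the key step (iii)$\Rightarrow$(i) is carried out via the Nagata idealization $R\ltimes M$ exactly as you do.
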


\begin{proof} The implications (i)$\Rightarrow$(ii)$\Rightarrow$(iii) and (i)$\Leftrightarrow$(iv) are clear.
\\
(iii)$\Rightarrow$(i): Let $M$ be an $R$-module and $M_{1},\ldots, M_{n}$ finitely many $R$-submodules of $M$ with $M=\bigcup\limits_{i=1}^{n}M_{i}$.
The Nagata idealization ring $R\times M$ is an extension of $R$ (notice that the map
$R\rightarrow R\times M$ given by $r \mapsto (r,0)$ is an injective ring map), thus by hypothesis it has avoidance.
Clearly $0\times M$ and $0\times M_{1},\ldots,0\times M_{n}$ are ideals of $R\times M$ with $0\times M=\bigcup\limits_{i=1}^{n}0\times M_{i}$.
So $0\times M=0\times M_{k}$ for some $k$, hence $M=M_{k}$.
\end{proof}

Every module-avoidance ring is an avoidance ring.
But the converse does not hold.
For example, the ring of integers $\mathbb{Z}$ is an avoidance ring, since it is a PID.
But it is not a module-avoidance ring, because the $\mathbb{Z}$-algebra $\mathbb{F}_{p}[x,y]$ is not an avoidance ring by Remark \ref{Remark 1 new} where $p$ is a prime number and $\mathbb{F}_{p}=\mathbb{Z}/p\mathbb{Z}$.
If $R$ is a module-avoidance ring and $R\rightarrow S$ is any ring map, then $S$ is a module-avoidance ring.
Every ring containing an infinite field as a subring is a module-avoidance ring.
By \cite[Theorem 2.3]{Quartararo-Butts}, a local ring is a module-avoidance ring if and only if its residue field is infinite (see also \cite[Proposition 1.7]{Quartararo-Butts}). 

\begin{corollary} Let $\{R_{i}\}$ be an arbitrary family of module-avoidance rings. Then $R=\prod\limits_{i}R_{i}$ is a module-avoidance ring if and only if $R$ modulo the ideal $I=\bigoplus\limits_{i}R_{i}$ is a module-avoidance ring.
\end{corollary}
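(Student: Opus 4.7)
The plan is straightforward. The forward implication is immediate: $R/I$ is a quotient of $R$, and as noted in the discussion preceding the corollary, module-avoidance is preserved under any ring map, so $R/I$ inherits module-avoidance from $R$.

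For the converse, I would invoke Proposition \ref{Proposition 2 $um$-ring}(ii) and show that every $R$-algebra $S$ is an avoidance ring; by Theorem \ref{Theorem 6 Q-B} it then suffices to verify that $S_N$ is an avoidance ring for every maximal ideal $N$ of $S$. Setting $\p := N \cap R$, I would split the analysis into two cases depending on whether $I \subseteq \p$, exactly in parallel with the proof of Corollary \ref{Corollary infinite avoidance}. In the case $I \not\subseteq \p$, pick a canonical idempotent $e_k \in I \setminus \p$: primality of $\p$ and the relation $e_k(1-e_k)=0$ give $1-e_k \in \p \subseteq N$, while $e_k \in S \setminus N$ is a unit in $S_N$, forcing $1-e_k = 0$ in $S_N$. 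This identifies $S_N$ with a localization of $e_k S$, which is an $R_k$-algebra via the isomorphism $R_k \cong e_k R$. Since $R_k$ is module-avoidance by hypothesis, Proposition \ref{Proposition 2 $um$-ring}(ii) makes $e_k S$ an avoidance ring, and hence so is its localization $S_N$ by Corollary \ref{Corollary 1}.

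The symmetric case $I \subseteq \p$ is handled similarly: every $e_k \in I$ lies in $\p$, so $1-e_k$ is a unit in $S_N$ and $e_k = 0$ in $S_N$; hence $IS_N = 0$ and $S_N \cong (S/IS)_{N/IS}$. Since $S/IS$ is an $R/I$-algebra and $R/I$ is module-avoidance by hypothesis, Proposition \ref{Proposition 2 $um$-ring}(ii) shows that $S/IS$ is an avoidance ring, and so is its localization $S_N$.

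The main subtlety I anticipate is recognizing that one should not try to work with arbitrary modules directly, but instead route through the algebra characterization in Proposition \ref{Proposition 2 $um$-ring}(ii); this opens the door to the local-global machinery of Theorem \ref{Theorem 6 Q-B}, after which the idempotent dichotomy already exploited in Corollary \ref{Corollary infinite avoidance} carries the argument through essentially verbatim.
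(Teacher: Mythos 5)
Your proof is correct, but it takes a genuinely different route from the paper's. The paper disposes of the converse in one line: it reruns the argument of Corollary \ref{Corollary infinite avoidance} verbatim on the maximal ideals of $R$ itself, replacing the local criterion for avoidance (Theorem \ref{Theorem 6 Q-B}) by the local criterion for module-avoidance from \cite[Theorem 2.3]{Quartararo-Butts} (a local ring is module-avoidance iff its residue field is infinite, and the property is local). You instead avoid that external theorem entirely: you translate module-avoidance of $R$ into avoidance of every $R$-algebra $S$ via Proposition \ref{Proposition 2 $um$-ring}(ii), then localize $S$ at each maximal ideal $N$ and run the idempotent dichotomy on $\p = \phi^{-1}(N)$, landing either in an $R_k$-algebra ($e_kS$, when some $e_k \notin \p$) or an $R/I$-algebra ($S/IS$, when $I \subseteq \p$), each of which is an avoidance ring by the hypothesis plus Proposition \ref{Proposition 2 $um$-ring}(ii) again. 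All the individual steps check out (in particular, $I \not\subseteq \p$ does force some generator $e_k \notin \p$, and $e_kS$ is indeed an $R_k$-algebra since $R \to e_kS$ kills $1-e_k$). What your approach buys is self-containedness: it needs only the locality of plain avoidance (Theorem \ref{Theorem 6 Q-B}) and the algebra characterization proved in this paper, at the modest cost of quantifying over all $R$-algebras; the paper's version is shorter but leans on the unproved local characterization of module-avoidance rings from Quartararo--Butts.
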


\begin{proof} The implication ``$\Rightarrow$'' is clear. The reverse implication is proved exactly like Corollary \ref{Corollary infinite avoidance} by applying \cite[Theorem 2.3]{Quartararo-Butts} instead of Theorem \ref{Theorem 6 Q-B}.
\end{proof}

\begin{remark}
Let $R$ be a ring such that every ideal can be written as an intersection (possibly infinite) of primary ideals (e.g. a Noetherian or Laskerian ring).
Then $R$ is an avoidance ring if and only if it has ``primary avoidance'', i.e. if an ideal of $R$ is contained in a finite union of primary ideals, then it is contained in one of them.
Indeed, assume $I, I_1, \ldots,I_n$ are finitely many ideals of $R$ with $I \subseteq \bigcup \limits_{k=1}^n I_k$.
Suppose $I$ is not contained in any of the $I_{k}$.
Thus for each $k$ there exists a primary ideal $\mathfrak{q}_{k}$ of $R$ containing $I_{k}$ such that $I$ is not contained in $\mathfrak{q}_{k}$.
But $I \subseteq \bigcup\limits_{k=1}^{n}\mathfrak{q}_k$ which is a contradiction.
\end{remark}

Another setting in which the avoidance property is closely related to being a PIR is for graded algebras over finite fields (see Theorem \ref{thm:graded}).
For convenience, we include the following lemma:

\begin{lemma}\label{lemma T-C} Let $R=\bigoplus\limits_{n\geqslant0}R_{n}$ be an $\mathbb{N}$-graded ring with $R_{0}=K$ a field, $\mathfrak{m}=\bigoplus\limits_{n\geqslant1}R_{n}$ the irrelevant ideal and $S=\{a_{i}: i\in I\}$ a set of homogeneous elements of $R$ of positive degrees. Then the following statements are equivalent: \\
$\mathbf{(i)}$ $S$ is a minimal set of generators of the $R_{0}$-algebra $R$. \\
$\mathbf{(ii)}$ $S$ is a minimal set of generators of the ideal $\mathfrak{m}$. \\
$\mathbf{(iii)}$ The set $\{a_{i}+\mathfrak{m}^{2}: i \in I\}$ is a basis for the $K$-space $\mathfrak{m}/\mathfrak{m}^{2}$. \\
$\mathbf{(iv)}$ The canonical map of $K$-algebras $K[x_{i}: i \in I]\rightarrow R$ given by $x_{i}\mapsto a_{i}$ is surjective and its kernel is contained in the square of the ideal $(x_{i}: i\in I)$. \\
In particular, such a set $S$ always exists. 
\end{lemma}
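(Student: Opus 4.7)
The plan is to establish the four conditions by proving (i) $\Leftrightarrow$ (ii) $\Leftrightarrow$ (iii) and (iii) $\Leftrightarrow$ (iv), with the grading serving throughout as a substitute for Nakayama's lemma. The omnipresent tool is: if $x \in R$ is homogeneous of degree $d$ and $x = \sum_j r_j a_j$ in $R$, then each $r_j$ may be replaced by its homogeneous component of degree $d - d_j$ (which is zero if $d < d_j$ and lies in $K$ if $d = d_j$) without changing the sum.

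For (ii) $\Leftrightarrow$ (iii), if $\{\bar a_i\}$ spans $\mathfrak{m}/\mathfrak{m}^2$ then $\mathfrak{m} = (a_i) + \mathfrak{m}^2$, and iterating by multiplying through by $\mathfrak{m}$ gives $\mathfrak{m} = (a_i) + \mathfrak{m}^n$ for all $n \geqslant 1$. Since $\mathfrak{m}^n \subseteq \bigoplus_{k \geqslant n} R_k$, a homogeneous element of degree $d$ already lies in $(a_i)$ (use $n = d + 1$), hence $\mathfrak{m} = (a_i)$; the converse is immediate. For minimality, if $a_{i_0}$ lies in the ideal generated by $\{a_j : j \ne i_0\}$, the bookkeeping above lets us write $a_{i_0} = \sum_{j \ne i_0,\, d_j = d_{i_0}} c_j a_j + \sum_{d_j < d_{i_0}} s_j a_j$ with $c_j \in K$ and $s_j \in \mathfrak{m}$, and reducing modulo $\mathfrak{m}^2$ yields a nontrivial $K$-linear relation on the $\bar a_j$'s, contradicting independence. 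The reverse direction is a direct lift.

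For (i) $\Leftrightarrow$ (ii), if $S$ generates $R$ as a $K$-algebra then every element of $\mathfrak{m}$, having no degree-$0$ part, is a polynomial in the $a_i$ without constant term and so lies in $(a_i)$; conversely, assuming $\mathfrak{m} = (a_i)$, induction on degree (using the graded bookkeeping to rewrite coefficients of strictly smaller degree) expresses every homogeneous element of $R$ as a polynomial in the $a_i$. For the minimality clauses, under the common hypothesis $\mathfrak{m} = (a_i)$ the conditions ``$a_{i_0}$ lies in the ideal generated by $\{a_j : j \ne i_0\}$'' and ``$a_{i_0}$ lies in the $K$-subalgebra generated by $\{a_j : j \ne i_0\}$'' coincide: in any expression $a_{i_0} = \sum_{j \ne i_0} r_j a_j$, the homogeneous component of $r_j$ of degree $d_{i_0} - d_j < d_{i_0}$ is a polynomial in those $a_k$ of degree $< d_{i_0}$ and hence cannot involve $a_{i_0}$, so the ideal expression rewrites as a $K$-algebra expression avoiding $a_{i_0}$; the reverse implication is trivial.

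For (iii) $\Leftrightarrow$ (iv), surjectivity of $\phi \colon K[x_i] \to R$ is the same as (i), already equivalent to the spanning half of (iii). Given surjectivity, $\ker \phi \subseteq (x_i)^2$ is equivalent to linear independence of $\{\bar a_i\}$: an element $f = c + \sum c_i x_i + g \in \ker \phi$ with $g \in (x_i)^2$ forces $c = 0$ by looking at the degree-$0$ component, then $\sum c_i a_i \in \mathfrak{m}^2$, so independence gives all $c_i = 0$ and $f \in (x_i)^2$; conversely, any relation $\sum c_i a_i \in \mathfrak{m}^2 = \phi((x_i)^2)$ gives $\sum c_i x_i - g \in \ker \phi \subseteq (x_i)^2$ for some $g \in (x_i)^2$, forcing $\sum c_i x_i \in (x_i)^2$ and hence every $c_i = 0$. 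Finally, $\mathfrak{m}/\mathfrak{m}^2$ is a graded $K$-vector space (both $\mathfrak{m}$ and $\mathfrak{m}^2$ being homogeneous ideals), so it admits a homogeneous $K$-basis whose elements lift to homogeneous members of $\mathfrak{m}$, proving existence. The main technical hurdle is consistent graded bookkeeping: one must verify at each step that replacing a coefficient by its correct homogeneous component preserves the relevant ideal memberships and prevents the offending element $a_{i_0}$ from sneaking back into a relation that was supposed to avoid it.
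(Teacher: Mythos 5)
Your proof is correct, and while the engine driving it (taking homogeneous components of coefficients, i.e.\ degree bookkeeping as a stand-in for Nakayama) is the same as the paper's, you route the equivalences differently in two places worth noting. For (i)$\Leftrightarrow$(ii) and the graded Nakayama step in (iii)$\Rightarrow$(ii), the paper outsources the work to \cite[Tag 07Z4]{de Jong} and to the graded Nakayama lemma, whereas you unwind both by hand (the $\mathfrak{m}=(a_i)+\mathfrak{m}^n$ trick with $\mathfrak{m}^n\subseteq\bigoplus_{k\geqslant n}R_k$, and induction on degree); this makes your argument self-contained at the cost of length. The more substantive difference is condition (iv): the paper proves (i)$\Leftrightarrow$(iv) by putting the induced grading $\deg(x_i)=\deg(a_i)$ on the polynomial ring, observing that the kernel is then a graded ideal, and extracting a homogeneous relation $rx_d-g$ witnessing redundancy of $a_d$; you instead prove (iii)$\Leftrightarrow$(iv) via the cleaner dictionary ``given surjectivity, $\ker\phi\subseteq(x_i)^2$ if and only if $\{a_i+\mathfrak{m}^2\}$ is linearly independent,'' which avoids the induced grading on $K[x_i:i\in I]$ entirely and isolates exactly what the kernel condition encodes. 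Your existence argument (homogeneous basis of the graded space $\mathfrak{m}/\mathfrak{m}^2$, lifted to homogeneous elements of $\mathfrak{m}$) matches the paper's. One small wording slip: surjectivity of $\phi$ is equivalent to the \emph{generation} half of (i), not to (i) itself; your argument clearly uses it only in that form, so nothing breaks.
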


\begin{proof} (i)$\Leftrightarrow$(ii): The assertion, including minimality, follows easily from \cite[Tag 07Z4]{de Jong}. \\
(ii)$\Leftrightarrow$(iii): It will be enough to show that $S$ generates the ideal $\mathfrak{m}$ if and only if $S':=\{a_{i}+\mathfrak{m}^{2}: i \in I\}$ generates the $R_{0}$-module $\mathfrak{m}/\mathfrak{m}^{2}$. First assume $S$ generates the ideal $\mathfrak{m}$. Then by \cite[Tag 07Z4]{de Jong}, we have $R=R_{0}[S]$. Thus each $b\in\mathfrak{m}$ can be written as a finite sum of elements of the form $ra^{m_{1}}_{i_{1}}\ldots a^{m_{n}}_{i_{n}}$ with $r\in R_{0}$, $m_{i}\geqslant1$ and $a_{i_{1}},\ldots,a_{i_{n}}\in S$. In this monomial, if some $m_{i}\geqslant2$ or we have $i_{d}\neq i_{k}$ for some indexes $i_{d}$ and $i_{k}$, then $ra^{m_{1}}_{i_{1}}\ldots a^{m_{n}}_{i_{n}}\in\mathfrak{m}^{2}$. Thus we may write $b+\mathfrak{m}^{2}=\sum\limits_{k=1}^{p}r_{k}(a_{k}+\mathfrak{m}^{2})$ where $r_{k}\in R_{0}$ and $a_{k}\in S$ for all $k$. Hence, $S'$ generates the $R_{0}$-module $\mathfrak{m}/\mathfrak{m}^{2}$. Conversely, we may write $\mathfrak{m}=\sum\limits_{i\in I}R_{0}a_{i}+\mathfrak{m}^{2}\subseteq \sum\limits_{i\in I}Ra_{i}+\mathfrak{m}^{2}\subseteq\mathfrak{m}$. Thus $\mathfrak{m}=\sum\limits_{i\in I}Ra_{i}+\mathfrak{m}^{2}$. 
This yields that $\mathfrak{m}(\mathfrak{m}/J)=\mathfrak{m}/J$ where $J:=\sum\limits_{i\in I}Ra_{i}$. Then by the graded version of the Nakayama lemma (which asserts that if $R$ is an $\mathbb{N}$-graded ring with the irrelevant ideal $R_{+}$ and $M$ is an $\mathbb{N}$-graded $R$-module with $R_{+}M=M$, then $M=0$), we have $\mathfrak{m}/J=0$ and so 
$\mathfrak{m}=\sum\limits_{i\in I}Ra_{i}$. \\
(i)$\Rightarrow$(iv): By assumption, the canonical map $K[x_{i}: i \in I]\rightarrow R$ is surjective.
By setting $\deg(x_{i}):=\deg(a_{i})$ for all $i$, we make the polynomial ring $K[x_{i}: i \in I]$ into an $\mathbb{N}$-graded ring whose base subring is $K$.
With this, the canonical map is a morphism of graded rings, so its kernel is a graded ideal.
Let $M$ be the graded ideal $(x_{i}: i\in I)$ of $K[x_{i}: i \in I]$.
If the kernel is not contained in $M^{2}$ then we can choose a homogeneous element of the form $f=rx_{d}-g$ in the kernel such that $0\neq r\in K$ and $g=g(x_{i_{1}},\ldots,x_{i_{n}})$ has no term involving $x_{d}$ (by our choice, $g$ has no term of the form $r'x_{d}$ with $r'\in K$, and if $g$ had a term $x_ih$ for some $h \not \in K$ then $f$ would not be homogeneous).
Then in $R$ we will have $ra_{d}-g(a_{i_{1}},\ldots,a_{i_{n}})=0$.
Since $K$ is a field, we may write $a_{d}=r^{-1}g(a_{i_{1}},\ldots,a_{i_{n}})$ which shows that $S\setminus\{a_{d}\}$ generates $R$ as a $K$-algebra, contradicting minimality of $S$. \\
(iv)$\Rightarrow$(i): Surjectivity yields that $R=R_{0}[S]$.
To prove minimality, suppose there is a proper subset $S'$ of $S$ such that $R=R_{0}[S']$.
Then choose some $a_{d}\in S\setminus S'$.
Thus there exists some polynomial $f(x_{i_{1}},\ldots,x_{i_{n}})\in R_{0}[x_{i}: i\in S']$ such that $a_{d}=f(a_{i_{1}},\ldots,a_{i_{n}})$.
This shows that the polynomial $x_{d}-f(x_{i_{1}},\ldots,x_{i_{n}})$ is in the kernel of the above canonical map which is contained in the square of the ideal $M=(x_{i}: i\in S)$.
It follows that $g:=x_{d}-c_{1}x_{i_{1}}+\ldots+c_{n}x_{i_{n}}\in M^{2}$ with $c_{j}\in R_{0}=K$.
Note that $d\notin\{i_{1},\ldots,i_{n}\}$.
Thus all of the coefficients of $g$ are zero.
In particular, $1=0$ which is a contradiction.
\\
Finally, every graded ring is generated (even additively) by all its homogeneous elements, and every generating set of a vector space has a subset as a basis.
Hence, there always exists such a set $S$ satisfying the above equivalent conditions.
\end{proof}

\begin{theorem} \label{thm:graded} Let $R=\bigoplus\limits_{n\geqslant0}R_{n}$ be an $\mathbb{N}$-graded ring with $R_{0}=K$ a finite field.
If $R$ is an avoidance ring, then it is a PIR.
\end{theorem}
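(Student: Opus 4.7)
The plan is to apply Lemma \ref{lemma T-C} to choose a minimal set $S$ of homogeneous generators of $R$ as a $K$-algebra, which by the equivalences there is simultaneously a minimal generating set of the irrelevant ideal $\mathfrak{m}=\bigoplus_{n\geqslant 1}R_{n}$ and whose residues form a $K$-basis of $\mathfrak{m}/\mathfrak{m}^{2}$. Since $R/\mathfrak{m}\cong K$, the ideal $\mathfrak{m}$ is maximal with finite residue field. The whole argument reduces to showing $|S|\leqslant 1$: the hard case $|S|\geqslant 2$ will be propagated down to the quotient $R/\mathfrak{m}^{2}$, where the configuration of Example \ref{ex:nonavoidance} reappears verbatim.

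If $|S|=0$ then $R=K$ is already a PIR. If $S=\{a\}$, then by Lemma \ref{lemma T-C}(iv) there is a surjection $K[x]\twoheadrightarrow R$ sending $x\mapsto a$, so $R$ is a quotient of the PID $K[x]$ and hence a PIR (quotients of PIRs are PIRs).

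Now suppose for contradiction that $|S|\geqslant 2$, and pick distinct $a_{1},a_{2}\in S$. Set $\bar{R}:=R/\mathfrak{m}^{2}$ and let $\bar{\mathfrak{m}},\bar{a}_{1},\bar{a}_{2}$ denote the images of $\mathfrak{m},a_{1},a_{2}$; then $\bar{\mathfrak{m}}^{2}=0$ and $\bar{a}_{1},\bar{a}_{2}$ are $K$-linearly independent in $\bar{\mathfrak{m}}$ by Lemma \ref{lemma T-C}(iii). For an arbitrary $r\in R$, decompose $r=r_{0}+m$ with $r_{0}\in R_{0}=K$ and $m\in\mathfrak{m}$; then $\bar{m}\bar{a}_{i}\in\bar{\mathfrak{m}}^{2}=0$, so $\bar{r}\bar{a}_{i}=r_{0}\bar{a}_{i}$. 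Consequently, the ideal $J:=(\bar{a}_{1},\bar{a}_{2})\subseteq\bar{R}$ coincides with $K\bar{a}_{1}+K\bar{a}_{2}$, a two-dimensional $K$-vector space which is a finite set. The same computation shows that for every nonzero $\bar{v}\in J$, the principal ideal $(\bar{v})$ equals $K\bar{v}$, a proper one-dimensional $K$-subspace of $J$. Therefore $J$ is the union of its $|K|+1$ proper principal sub-ideals, so $J$ does not have avoidance in $\bar{R}$. But $\bar{R}$ is a quotient of the avoidance ring $R$, so it must itself be an avoidance ring by Corollary \ref{Corollary 1}, a contradiction.

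The main delicate point is the identification $(\bar{a}_{1},\bar{a}_{2})=K\bar{a}_{1}+K\bar{a}_{2}$, which uses both the graded structure of $R$ (so that any multiplier $r\in R$ has a well-defined degree-$0$ component) and the relation $\bar{\mathfrak{m}}^{2}=0$, which together collapse $\bar{R}\bar{v}$ to the $K$-line $K\bar{v}$ for any $\bar{v}\in\bar{\mathfrak{m}}$. Once this is in hand, the rest is precisely the elementary $K$-linear covering of a $2$-dimensional space by its $|K|+1$ lines, as in Example \ref{ex:nonavoidance}.
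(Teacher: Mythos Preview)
Your proof is correct and follows essentially the same route as the paper's: both invoke Lemma~\ref{lemma T-C} to obtain a minimal homogeneous generating set, observe that $|S|\leqslant 1$ forces $R$ to be a quotient of $K[x]$ and hence a PIR, and handle $|S|\geqslant 2$ by passing to the quotient $R/\mathfrak{m}^{2}$ and exhibiting a non-avoidance ideal there. The only cosmetic difference is that the paper uses part~(iv) of Lemma~\ref{lemma T-C} to identify $R/\mathfrak{m}^{2}$ with $K[X]/(X)^{2}$ and then cites Example~\ref{ex:nonavoidance} directly, whereas you use part~(iii) and re-derive the covering-by-lines argument of that example intrinsically inside $R/\mathfrak{m}^{2}$; the underlying idea is identical.
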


\begin{proof} By Lemma \ref{lemma T-C}, we may write $R \cong K[X]/I$, where $X$ is a set of indeterminates over $K$ and $I$ is an ideal in $K[X]$ contained in $\m^2$, where $\m = (X)$ is the maximal ideal of $K[X]$.
If $R$ is not a PIR, then necessarily $|X| \geqslant 2$ (any quotient of a univariate polynomial ring over $K$ is a PIR).
Since there exists a surjection $R \twoheadrightarrow K[X]/\m^2$, by Example \ref{ex:nonavoidance}, $R$ does not have avoidance. \\
In fact, one sees from this that the only $\N$-graded $K$-algebras which have avoidance (for $K$ a finite field) are $K[x]$ and $K[x]/(x^i)$ for some $i \geqslant 1$.
\end{proof}

For an $\mathbb{N}$-graded ring $R=\bigoplus\limits_{n\geqslant0}R_{n}$ with $R_{0}=K$ a field, using \cite[Theorem 2.3]{Quartararo-Butts}, then $R$ is a module-avoidance ring if and only if the field $K$ is infinite.
In particular, if $\mathfrak{m}$ is a maximal ideal of a ring $R$ then the associated graded ring  $\gr_{\mathfrak{m}}R=
\bigoplus\limits_{d\geqslant0}\mathfrak{m}^{d}/\mathfrak{m}^{d+1}$ is a module-avoidance ring if and only if the field $R/\mathfrak{m}$ is infinite. 

\section{The avoidance property along ring maps}

We next turn towards avoidance in the relative setting, i.e. its behavior along a ring map.
Recall that a ring map $\phi : R \to S$ is an epimorphism if and only if for each $S$-module $M$ the canonical morphism of $S$-modules $M \otimes _{R}S \to M$ given by $x \otimes s \mapsto sx$ is injective (in fact, an isomorphism). In the literature (see e.g. \cite[Tag 04VM]{de Jong}, \cite{Samuel}, \cite{Tarizadeh 4}, \cite{Tarizadeh 5} and \cite{Tarizadeh}), one can find many properties and various equivalences of epimorphisms of rings. Note that this notion encompasses both surjective ring maps and localization maps. 

\begin{theorem}\label{Theorem 2} Let $\phi : R \to S$ be a flat ring epimorphism.
If $R$ is an avoidance ring, then $S$ is as well.
\end{theorem}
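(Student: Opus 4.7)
The plan is to reduce this to Lemma \ref{Lemma 1} by showing that, for any flat ring epimorphism $\phi : R \to S$, every ideal of $S$ is extended under $\phi$. Once established, Lemma \ref{Lemma 1} transfers avoidance from $R$ to $S$ directly.

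To prove that every ideal $J \subseteq S$ satisfies $J = J^{ce} = \phi^{-1}(J) \cdot S$, I would combine the two hypotheses as follows. Set $I := \phi^{-1}(J)$; then $\phi$ induces an injective $R$-module homomorphism $R/I \hookrightarrow S/J$. Flatness of $\phi$ preserves this injectivity after tensoring with $S$ over $R$, yielding an injection $(R/I) \otimes_R S \hookrightarrow (S/J) \otimes_R S$. The left-hand side is canonically identified with $S/IS$ by right-exactness, while the right-hand side is canonically identified with $S/J$ via the epimorphism property recalled at the start of this section, since $S/J$ is an $S$-module. Under these identifications, the resulting injection $S/IS \hookrightarrow S/J$ is the natural surjection induced by the inclusion $IS \subseteq J$, and must therefore be an isomorphism, forcing $IS = J$.

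The main delicate point is checking that the induced map on quotients is indeed the natural projection: this is a small diagram chase tracing $\bar{r} \otimes s \mapsto \overline{\phi(r) s}$ through both identifications, but it must be done carefully. Once it is in place, Lemma \ref{Lemma 1} completes the proof. As an alternative route, one can verify condition (iv) of Theorem \ref{Theorem 6 Q-B} directly: for any maximal ideal $M$ of $S$ with contraction $\mathfrak{p} = \phi^{-1}(M)$, a classical property of flat epimorphisms is that the induced map $R_\mathfrak{p} \to S_M$ is an isomorphism, and then avoidance of $S_M$ follows from avoidance of $R_\mathfrak{p}$, which in turn follows from Theorem \ref{Theorem 6 Q-B}(iii).
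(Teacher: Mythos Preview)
Your primary argument is correct and matches the paper's proof essentially verbatim: the paper likewise reduces to Lemma~\ref{Lemma 1} by proving that every ideal of $S$ is extended, using the same chain $S/IS \cong (R/I)\otimes_R S \hookrightarrow (S/J)\otimes_R S \cong S/J$ (flatness for the middle injection, the epimorphism property for the right-hand identification), and then observing that the composite is the canonical surjection $s+IS\mapsto s+J$, forcing $J=IS$. The alternative route you sketch via Theorem~\ref{Theorem 6 Q-B} and the local isomorphism $R_{\mathfrak p}\cong S_M$ does not appear in the paper, but it is also a valid approach.
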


\begin{proof} It is well known that every ideal of $S$ is extended under $\phi$ (cf. \cite{Samuel} or \cite[Theorem 2.5(ii)]{Tarizadeh}).
Here we provide a short proof for completeness.
Let $J$ be an ideal of $S$, and set $I := \phi^{-1}(J)$.
Clearly $IS \subseteq J$.
For the reverse inclusion, note that the canonical ring map $\phi' : R/I \to S/J$ given by $r+I \mapsto \phi(r)+J$ is injective.
This gives an injective map $\phi' \otimes 1_S : R/I \otimes _R S \to S/J \otimes _R S$, because $S$ is $R$-flat.
Thus the canonical ring map $f : S/IS \to S/J$ given by $s+IS \mapsto s+J$ factors as $S/IS \xrightarrow{\cong} R/I \otimes _R S \xrightarrow{\phi' \otimes 1_S} S/J \otimes _R S \xrightarrow{\cong} S/J$.
Hence $f$ is injective, so $J \subseteq IS$.
Now the assertion follows from Lemma \ref{Lemma 1}.
\end{proof}

The above result, in particular, yields an alternative proof to the fact that the avoidance property is preserved by localizations (see Corollary \ref{Corollary 1}). Indeed, for any multiplicative subset $S$ of a ring $R$, the canonical ring map $R\rightarrow S^{-1}R$ is a typical example of a flat epimorphism. \\

We now seek dual versions of Lemma \ref{Lemma 1} and Theorem \ref{Theorem 2}.
If $\phi : R \to S$ is a ring map and $I, I_1,\ldots,I_n$ are ideals of $R$ with $I \subseteq \bigcup \limits_{k=1}^n I_k$, then $\phi(I) \subseteq \phi(\bigcup\limits_{k=1}^{n}I_k)=
\bigcup\limits_{k=1}^{n}\phi(I_k)$.
However, the extension ideal $I^{e} = IS$ is not necessarily contained in $\bigcup\limits_{k=1}^{n}(I_k)^{e}$.
This leads us to the following definition.

\begin{definition} We say that a ring map $\phi:R\to S$ has \emph{avoidance} if whenever $I, I_1,...,I_n$ are finitely many ideals of $R$ with $I\subseteq\bigcup\limits_{k=1}^{n} I_k$, then $IS\subseteq I_kS$ for some $k$. We also say that a ring $S$ has \emph{strong avoidance} if every ring map $\phi : R \to S$ has avoidance.
\end{definition}

Strong avoidance implies avoidance by considering the identity ring map.
However, the converse does not hold: there are rings which are avoidance rings, but do not have strong avoidance (see Example \ref{Example 1}).
Similarly, if $R$ is an avoidance ring, then every morphism of rings $R\to S$ has avoidance. The converse also holds by considering the identity map.
But its dual does not hold: if $S$ is an avoidance ring then a given ring map $R \to S$ does not necessarily have avoidance (see Example \ref{Example 1}). If a ring map $R\to S$ has avoidance then for any ring map $S\to T$ the composite $R\to T$ has avoidance.

\begin{proposition}\label{Proposition 1} Every absolutely flat ring has strong avoidance.
\end{proposition}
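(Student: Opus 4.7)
The plan is to combine the generalized avoidance lemma (Theorem \ref{Theorem 7 McCoy}(iii)) with the observation that every ideal of an absolutely flat ring is radical. Let $\phi : R \to S$ be an arbitrary ring map with $S$ absolutely flat, and let $I, I_{1}, \ldots, I_{n}$ be ideals of $R$ with $I \subseteq \bigcup\limits_{k=1}^{n} I_{k}$. The goal is to produce a single index $k$ with $IS \subseteq I_{k} S$.

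A natural first attempt would be to show $IS \subseteq \bigcup\limits_{k=1}^{n} I_{k} S$ and then apply the avoidance of $S$ itself (which holds since $S$ is B\'ezout, hence an avoidance ring by Corollary \ref{cor:exsAvoidance}(i)). This does not work: although $\phi(I) \subseteq \bigcup\limits_{k=1}^{n}\phi(I_{k}) \subseteq \bigcup\limits_{k=1}^{n} I_{k} S$, the full ideal $IS$ generated by $\phi(I)$ need not be swallowed by the set-theoretic union. Instead, I would apply Theorem \ref{Theorem 7 McCoy}(iii) inside $R$ to obtain an integer $d \geqslant 1$ and an index $k$ with $I^{d} \subseteq I_{k}$. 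Since extension of ideals commutes with products ($(IJ)S = (IS)(JS)$), one gets $(IS)^{d} = (I^{d})S \subseteq I_{k} S$.

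The finish is then immediate once one notes that every ideal of $S$ is radical: for any $a \in S$ there is $b \in S$ with $a = a^{2}b$, so by induction $a = a^{n+1} b^{n}$ for all $n \geqslant 0$, whence $a^{n} \in J$ forces $a = a^{n+1}b^{n} \in J$. Applied to $J = I_{k} S$, this gives $IS \subseteq \sqrt{(IS)^{d}} \subseteq \sqrt{I_{k} S} = I_{k} S$, as desired. Since $\phi$ was arbitrary, $S$ has strong avoidance. The only real content is the choice of tool: bypassing the failed covering argument by passing through powers via Theorem \ref{Theorem 7 McCoy}(iii), after which the radicality of every ideal in an absolutely flat ring instantly restores the desired containment.
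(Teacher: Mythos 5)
Your proposal is correct and follows essentially the same route as the paper: both apply Theorem \ref{Theorem 7 McCoy}(iii) in $R$ and then exploit the fact that every ideal of an absolutely flat ring is radical. The only cosmetic difference is that you extend $I^{d}\subseteq I_{k}$ to $S$ and take radicals there, whereas the paper first passes to $I\subseteq\sqrt{I_{k}}$ in $R$ and then uses $(\sqrt{I_{k}})^{e}\subseteq\sqrt{(I_{k})^{e}}=(I_{k})^{e}$.
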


\begin{proof} Consider a ring map $\phi : R \to S$ and finitely many ideals $I, I_1,\ldots,I_n$ of $R$ with $S$ an absolutely flat ring.
If $I \subseteq \bigcup \limits_{k=1}^n I_k$, then by \cite[Theorem 2.2]{Tarizadeh-Chen} or by Theorem \ref{Theorem 7 McCoy}(iii), $I \subseteq \sqrt{I_k}$ for some $k$.
This yields that $I^{e} \subseteq (\sqrt{I_k})^{e} \subseteq \sqrt{(I_k)^{e}}=(I_k)^{e}$, the latter equality follows from the fact every ideal of an absolutely flat ring is a radical ideal.
\end{proof}

\begin{remark}
The converse of the above result does not hold.
For example, we show that the ring of integers $\mathbb{Z}$ has strong avoidance.
Let $\phi : R \to\mathbb{Z}$ be a ring map.
Since $\mathbb{Z}$ is the initial object in the category of rings, there exists a unique ring map $\psi : \mathbb{Z} \to R$ such that $\phi\psi$ is the identity map.
Thus $\phi$ is surjective, so for any ideal $I$ of $R$, one has $I^e = \phi(I)$.
Now if $I, I_1,\ldots,I_n$ are ideals of $R$ with $I \subseteq \bigcup \limits_{k=1}^n I_k$, then $I^{e}=\phi(I) \subseteq \phi(\bigcup\limits_{k=1}^{n}I_k)=
\bigcup\limits_{k=1}^{n}\phi(I_k)=\bigcup\limits_{k=1}^{n}(I_k)^{e}$.
But $I^{e}$ is a principal ideal since $\mathbb{Z}$ is a PID, so $I^{e} \subseteq (I_k)^{e}$ for some $k$.
\end{remark}

In order to correctly dualize Theorem \ref{Theorem 2}, one must find a suitable class of monomorphisms.
Recall that a ring map $\phi : R \to S$ is called a \emph{pure morphism} (or \emph{universally injective}) if for every $R$-module $M$, the induced map $1_{M} \otimes \phi : M \otimes _{R}R \to M \otimes _{R}S$, or equivalently, the map
$M \to M \otimes _{R}S$ given by $x \mapsto x \otimes 1$ is injective.
Pure morphisms have very nice properties, and have been extensively studied in the literature since Grothendieck (see e.g. \cite[Tag 08WE]{de Jong}). \\

By restricting to cyclic modules $M = R/I$, we obtain a notion of \emph{cyclically pure morphism} (following terminology of Hochster \cite{Hochster}): we say that a ring map $\phi : R \to S$ is \emph{cyclically pure} if for each ideal $I$ of $R$ the induced ring map $R/I \to S/IS$ given by $r+I \mapsto \phi(r)+IS$ is injective.
In particular, a cyclically pure morphism is injective.
Note that a ring map is cyclically pure if and only if every ideal of the source ring is contracted under this map.


\begin{proposition}
Let $\phi : R \to S$ be a cyclically pure ring map.
Then: \\
$\mathbf{(i)}$ If $S$ is absolutely flat, then $R$ is as well. \\
$\mathbf{(ii)}$ If $S$ is a valuation ring, then $R$ is as well. \\
In particular, in both cases above $R$ is an avoidance ring.
\end{proposition}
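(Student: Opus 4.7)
The plan is to exploit the fact that cyclic purity means every ideal of $R$ is contracted from $S$, i.e., $I = \phi^{-1}(IS)$ for every ideal $I \subseteq R$. For each of (i) and (ii) I would use an ideal-theoretic characterization of the target class of rings (absolutely flat, resp.\ valuation) that is manifestly preserved under passing to a subring via a map in which every ideal is contracted. The ``In particular" clause will then be immediate from Corollary~\ref{cor:exsAvoidance}(i), since both absolutely flat rings and valuation rings are B\'ezout.

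For (i), I would use the characterization that a ring is absolutely flat if and only if every ideal is radical (equivalently, $a \in (a^2)$ for every element $a$). Let $I \subseteq R$ be an ideal; it suffices to show $\sqrt{I} \subseteq I$. If $a \in R$ with $a^n \in I$, then $\phi(a)^n \in IS$, so $\phi(a) \in \sqrt{IS} = IS$, the last equality because $S$ is absolutely flat. Hence $a \in \phi^{-1}(IS) = I$ by cyclic purity, so $I$ is radical. Thus every ideal of $R$ is radical and $R$ is absolutely flat.

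For (ii), note first that a cyclically pure map is injective, so $\phi$ embeds $R$ into the domain $S$ and $R$ is a domain. To show $R$ is a valuation ring, I would verify that the principal ideals of $R$ are totally ordered. Given $a, b \in R$, the principal ideals $\phi(a)S$ and $\phi(b)S$ are comparable in $S$; say $\phi(a)S \subseteq \phi(b)S$. Then $\phi(a) \in (b)S$, so $a \in \phi^{-1}((b)S) = (b)$ by cyclic purity, giving $(a) \subseteq (b)$.

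I expect no real obstacle beyond invoking the right characterization of each target class: the technical content is just that cyclic purity passes containment statements in $S$ back to containment statements in $R$, via the identity $\phi^{-1}(IS) = I$. Finally, for the avoidance conclusion in both (i) and (ii), I would remark that $R$ is in either case B\'ezout (in case (i), finitely generated ideals are even generated by idempotents) and therefore an avoidance ring by Corollary~\ref{cor:exsAvoidance}(i).
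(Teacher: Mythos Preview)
Your proposal is correct and follows essentially the same approach as the paper: both parts use cyclic purity (every ideal of $R$ is the contraction of its extension) to pull back the relevant ideal-theoretic characterization---``every ideal is radical'' for (i), ``ideals are totally ordered'' for (ii)---from $S$ to $R$. The only cosmetic differences are that the paper phrases (i) as ``contraction of a radical ideal is radical'' and checks comparability of arbitrary ideals in (ii) rather than just principal ones.
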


\begin{proof}
$\mathbf{(i)}$: If $S$ is absolutely flat, then every ideal of $S$ is radical.
Since the contraction of a radical ideal is radical, this shows that every ideal of $R$ is radical, so $R$ is absolutely flat. \\
$\mathbf{(ii)}$: If $S$ is a valuation ring, then $S$ is a domain whose ideals are totally ordered under inclusion.
Since $\varphi$ is injective, $R$ is also a domain, and any two ideals $I_1, I_2$ of $R$ are contractions of comparable ideals $J_1, J_2$ of $S$, hence $I_1, I_2$ are comparable in $R$, so $R$ is a valuation ring.
\end{proof}

Finally, we are ready to state the dual version of Lemma \ref{Lemma 1}:

\begin{lemma} \label{Theorem 3} Let $\phi : R \to S$ be a cyclically pure ring map.
If $S$ has strong avoidance, then $R$ has strong avoidance.
\end{lemma}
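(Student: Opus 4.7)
The plan is to apply strong avoidance of $S$ to the composite map $\phi \circ \psi$ for an arbitrary ring map $\psi : R' \to R$, and then descend the resulting containment of extended ideals back from $S$ to $R$ by using that cyclically pure morphisms preserve ideal containments under extension-contraction.

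More concretely, I would start by fixing a ring map $\psi : R' \to R$ and ideals $I, I_1, \ldots, I_n$ of $R'$ with $I \subseteq \bigcup_{k=1}^n I_k$. I want to show that $IR \subseteq I_k R$ for some $k$, where $IR$ denotes the extension along $\psi$. Forming the composite $\phi \psi : R' \to S$ and invoking strong avoidance of $S$, I obtain some $k$ such that $I \cdot S \subseteq I_k \cdot S$, where extensions are taken along $\phi \psi$. Because extension is functorial along composition of ring maps, this containment can be rewritten as $(IR)S \subseteq (I_kR)S$, where now the outer extension is taken along $\phi$ and the inner along $\psi$.

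At this point the only remaining task is to descend the containment $(IR)S \subseteq (I_kR)S$ along $\phi$. This is precisely where cyclic purity enters: the equivalent characterization stated just before the lemma says that $\phi$ is cyclically pure if and only if every ideal of $R$ is contracted under $\phi$, i.e.\ $J = (JS)^c$ for each ideal $J$ of $R$. Applying the contraction operation to both sides of $(IR)S \subseteq (I_kR)S$, we conclude $IR = ((IR)S)^c \subseteq ((I_kR)S)^c = I_k R$, as required. Since $\psi : R' \to R$ was arbitrary, this establishes strong avoidance of $R$.

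I do not foresee any real obstacle here; the argument is essentially a diagram chase combining two clean facts: that the extension of an ideal along a composition equals the iterated extension, and that cyclic purity preserves ideal containments under extension. The only thing to be careful about is notation, since the symbol $IS$ in the definition of avoidance for a ring map secretly involves the chosen map, so one must keep track of whether extensions are taken along $\phi$, $\psi$, or $\phi \psi$.
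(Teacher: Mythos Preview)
Your proof is correct and follows essentially the same route as the paper's own argument: compose with $\phi$, apply strong avoidance of $S$ to obtain $(IR)S\subseteq(I_kR)S$, then contract along $\phi$ using cyclic purity to conclude $IR\subseteq I_kR$. The only differences are cosmetic (you write $\psi$ where the paper writes $h$, and you spell out the functoriality of extension along a composite map slightly more explicitly).
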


\begin{proof} Let $h : R' \to R$ be any ring map and $I, I_1,\ldots,I_n$ ideals of $R'$ with $I \subseteq \bigcup \limits_{k=1}^n I_k$.
Considering extensions of ideals under the ring map $\phi h : R' \to S$, we have $IS \subseteq I_{k}S$ for some $k$, because $S$ has strong avoidance.
Since $\phi$ is cyclically pure, $IR = \phi^{-1}\big((IR)S\big) = \phi^{-1}(IS)$, and similarly $I_kR = \phi^{-1}(I_kS)$.
Therefore $IR \subseteq I_{k}R$, and hence $R$ has strong avoidance.
\end{proof}

\begin{corollary} \label{Corollary 3} Let $\phi : R \to S$ be a pure morphism.
If $S$ has strong avoidance, then $R$ has strong avoidance.
\end{corollary}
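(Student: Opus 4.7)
The plan is to reduce the corollary to Lemma \ref{Theorem 3} by showing that every pure morphism is in particular cyclically pure. Since the hypothesis of Lemma \ref{Theorem 3} is phrased precisely for cyclically pure morphisms, once we verify that a pure morphism meets this weaker condition, the conclusion is immediate.

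The key observation is that cyclic purity of $\phi : R \to S$ is exactly the injectivity of the canonical map $R/I \to S/IS$ for every ideal $I$ of $R$. Under the canonical isomorphism of $R$-modules $R/I \otimes_R S \cong S/IS$ given by $(r+I) \otimes s \mapsto \phi(r)s + IS$, this map coincides with the base-change map $R/I \to R/I \otimes_R S$, $x \mapsto x \otimes 1$. Thus, the definition of a pure morphism $\phi$, applied to the cyclic $R$-module $M = R/I$, directly yields injectivity of $R/I \to S/IS$. Since $I$ was arbitrary, $\phi$ is cyclically pure.

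With this in hand, there is really no obstacle: assume $\phi : R \to S$ is pure and $S$ has strong avoidance. By the above, $\phi$ is cyclically pure, so Lemma \ref{Theorem 3} applies and delivers that $R$ has strong avoidance. The only thing worth being careful about is the direction of the identification $R/I \otimes_R S \cong S/IS$ and the fact that the base-change map $M \to M \otimes_R S$ in the definition of purity is precisely the map $x \mapsto x \otimes 1$ — this is what makes the specialization $M = R/I$ yield exactly cyclic purity rather than something weaker.
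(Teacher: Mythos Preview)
Your proof is correct and follows exactly the paper's approach: it reduces the corollary to Lemma \ref{Theorem 3} by noting that every pure morphism is cyclically pure, with the only difference being that you spell out why purity applied to the cyclic module $M = R/I$ yields cyclic purity, whereas the paper simply asserts this implication in one line.
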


\begin{proof}
Every pure morphism is cyclically pure, so the assertion follows from Lemma \ref{Theorem 3}.
\end{proof}

\begin{remark}
As special cases of Corollary \ref{Corollary 3}, we have the following examples of pure morphisms: \\
$\mathbf{(i)}$ Every faithfully flat ring map is pure. \\
$\mathbf{(ii)}$ If a ring map $\phi:R\rightarrow S$ splits as an $R$-module (i.e. there exists a morphism of $R$-modules $\psi : S \to R$ with $\psi \phi = 1_R$), then it is pure.
For example, for any ring map $\phi:R\rightarrow S$, the canonical ring map $S \to S \otimes_R S$ given by $s \mapsto s \otimes 1$ splits as an $S$-module.
As another example, by the direct summand theorem (see \cite{Andre} or \cite[Theorem 5.4]{Bhatt}), any finite ring map $R \to S$ with $R$ a regular Noetherian ring splits as an $R$-module.
\end{remark}

\begin{example}\label{Example 1} The assumption of ``strong avoidance'' in Lemma \ref{Theorem 3} is crucial.
In other words, the naive dual statement of Lemma \ref{Lemma 1} does not hold.
For example, let $K$ be a finite field, $R$ a $K$-algebra which is not an avoidance ring (see e.g. Example \ref{ex:nonavoidance}) and $F$ an infinite field containing $K$ as a subfield (e.g. $F = \overline{K}$, the algebraic closure of $K$).
Then the canonical ring map $R \to R \otimes _{K}F$ given by $r \mapsto r \otimes 1_{F}$ is faithfully flat, since it makes $R \otimes _{K}F$ a nonzero free $R$-module.
Moreover, note that the ring $R \otimes_K F$ is an avoidance ring by Corollary \ref{cor:exsAvoidance}(iii).
Also note that this ring map $R \to R \otimes _{K}F$ does not have avoidance, because if it has avoidance then $R$ will be an avoidance ring which is impossible.
In particular, for a finite field $K$ the canonical ring map $K[x,y]\rightarrow\overline{K}[x,y]$ does not have avoidance.
Finally, note that ``strong avoidance'' and ``module-avoidance ring'' are quite distinct concepts.
For example, $R \otimes_K F$ is a module-avoidance ring, because it contains the infinite field $F$ as a subring, but it does not have strong avoidance by Corollary \ref{Corollary 3}.
Conversely, we observed that the ring of integers $\mathbb{Z}$ has strong avoidance, but is not a module-avoidance ring.
\end{example}

We conclude this article with the following problem. 

\begin{conjecture} A ring $R$ has strong avoidance if and only if for each prime ideal $\mathfrak{p}$ of $R$, the ring $R_{\mathfrak{p}}$ is a PID.
\end{conjecture}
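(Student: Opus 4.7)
The plan is to address both directions of the proposed equivalence; the forward direction is expected to be the main obstacle.

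For $(\Leftarrow)$, assuming each $R_\mathfrak{p}$ is a PID, the first step is to prove the lemma that every PID has strong avoidance. Given a map $\phi : A \to P$ with $P$ a PID and $I \subseteq \bigcup_{k=1}^n I_k$ in $A$, the extended ideal $IP$ is principal, say $IP = (d)$. For each prime $\pi$ of $P$ dividing $d$, any element of $I$ realizing the minimal $v_\pi$-valuation must lie in some $I_{k(\pi)}$, yielding $v_\pi(I_{k(\pi)}P) \leq v_\pi(d)$. Combined with Theorem \ref{Theorem 7 McCoy}(iii), which gives $I^N \subseteq I_{k_0}$ for some index $k_0$ and consequently that every prime dividing $d_{k_0}$ also divides $d$, a combinatorial argument over the finite sets of such primes and indices should yield a uniform $k^*$ with $IP \subseteq I_{k^*}P$; establishing this combinatorial uniformity is the first nontrivial step. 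To globalize from PID localizations to $R$ itself, one examines each $IR_\mathfrak{p} \subseteq I_{k(\mathfrak{p})}R_\mathfrak{p}$, and extracts a uniform $k$ across $\Spec(R)$ via openness of the loci $\{\mathfrak{p} : IR_\mathfrak{p} \subseteq I_k R_\mathfrak{p}\}$ (after reducing to the finitely generated case) together with quasi-compactness of $\Spec(R)$.

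For $(\Rightarrow)$, assume $R$ has strong avoidance. Applying the definition to the identity map yields that $R$ is an avoidance ring, hence by Theorem \ref{Theorem 6 Q-B} so is each $R_\mathfrak{p}$; Corollary \ref{Corollary 4 Q-B} then gives that either $R_\mathfrak{p}/\mathfrak{p}R_\mathfrak{p}$ is infinite or $R_\mathfrak{p}$ is a B\'ezout ring. The task is to upgrade this to ``$R_\mathfrak{p}$ is a PID.'' A natural intermediate step is to show that strong avoidance itself localizes: since $R \to R_\mathfrak{p}$ is a flat epimorphism, this should follow by adapting Theorem \ref{Theorem 2}. Having reduced to $R$ local, three failure modes must be separately eliminated: (a) $R$ is not a domain, (b) $R$ is a B\'ezout domain but not Noetherian, and (c) $R$ has infinite residue field but is not B\'ezout.

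For (a) and (c) in positive residue characteristic $p$, the plan is to imitate Example \ref{Example 1}: given a zero-divisor pair $ab = 0$ or a nonprincipal ideal $(r_1, r_2) \subseteq R$, the map $\mathbb{F}_p[x, y] \to R$ sending $x \mapsto r_1, y \mapsto r_2$ pulls back the finite cover $(x, y) = \bigcup_{a, b \in \mathbb{F}_p} \bigl((ax + by) + (x, y)^2\bigr)$; an application of Nakayama's lemma ensures that the extensions of the covering ideals to $R$ do not contain $(r_1, r_2)R$, witnessing failure of strong avoidance and contradicting the hypothesis. In equal characteristic zero, $\mathbb{Z}[x, y]$ replaces $\mathbb{F}_p[x, y]$ as source, but finding suitable finite covers of $(x, y)$ is subtler. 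The hardest case is (b): constructing a witness to failure of strong avoidance in a non-Noetherian B\'ezout local ring, such as a rank-one valuation ring with non-discrete value group, appears to require exploiting non-finite-generation in an essential way and likely demands techniques beyond those established in this paper. The persistence of this obstacle is consistent with the statement's status as a conjecture.
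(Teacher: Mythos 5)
This statement is left as a \emph{conjecture} in the paper, so there is no proof of it to compare yours against; the only question is whether your outline could be completed. Along the proposed route it cannot, because the very first lemma you need for $(\Leftarrow)$ --- that every PID has strong avoidance --- is false, and the failure occurs exactly at the ``combinatorial uniformity'' you flag as the first nontrivial step. Take $A=\mathbb{F}_2[x,y]$ with $\mathfrak{m}=(x,y)$, $I=\mathfrak{m}$, and the covering $I=I_1\cup I_2\cup I_3$ where $I_1=(x)+\mathfrak{m}^2$, $I_2=(y)+\mathfrak{m}^2$, $I_3=(x+y)+\mathfrak{m}^2$ (valid: split any $f\in\mathfrak{m}$ into its linear part plus an element of $\mathfrak{m}^2$). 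Map to the PID $P=\mathbb{F}_2[t]$ by $x\mapsto t^2(t+1)(t^2+t+1)$ and $y\mapsto t(t+1)^3(t^2+t+1)$, and set $g=t(t+1)(t^2+t+1)$. Then $IP=(g)$ and $\mathfrak{m}^2P=(g^2)$, while $I_1P=(gt)$, $I_2P=\bigl(g(t+1)\bigr)$, $I_3P=\bigl(g(t^2+t+1)\bigr)$, none of which contains $g$. For each prime $\pi$ dividing $g$ there is indeed some $k(\pi)$ with $v_\pi(I_{k(\pi)}P)\leqslant v_\pi(g)$, exactly as you observe, but the three indices cover the three primes in a cyclic pattern and no single $k^\ast$ works for all of them; the McCoy-type information $d_k\mid g^N$ controls only the support of the generators $d_k$ of $I_kP$, not their multiplicities, and cannot exclude this configuration.

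Since every localization of $\mathbb{F}_2[t]$ is a PID, this example in fact refutes the $(\Leftarrow)$ direction of the conjecture as literally stated, so the statement would have to be reformulated before any strategy can succeed. (The paper's observation that $\mathbb{Z}$ has strong avoidance rests on the fact that every ring map into $\mathbb{Z}$ is surjective, which has no analogue for a general PID; your proposed valuation-theoretic substitute is precisely what breaks.) Your $(\Rightarrow)$ direction also has unaddressed gaps, some of which you acknowledge: Theorem \ref{Theorem 2} concerns the avoidance property of rings, and ``adapting'' it to show that strong avoidance passes from $R$ to $R_{\mathfrak{p}}$ is not routine, since a ring map $A\to R_{\mathfrak{p}}$ need not factor through $R$; the characteristic-zero and mixed-characteristic versions of your cases (a) and (c) are not supplied; and case (b) is left open. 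As it stands the proposal establishes neither implication.
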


\textbf{Acknowledgments.} We would like to give sincere thanks to Professor Christian Gottlieb, who generously shared with us his valuable ideas. We also thank Professor Neil Epstein, who pointed out the notion of cyclic purity, which led us to an invaluable literature search which greatly changed the content from a previous version. Last but not least, we give our deep gratitude to the referee for a very careful reading of the article.


\begin{thebibliography}{10}
\bibitem{Andre}
Y. Andr\'{e}, La conjecture du facteur direct, Publ. Math. Inst. Hautes \'{E}tudes Sci. \textbf{127} (2018) 71-93.
\bibitem{Backelin}
J. Backelin, Ideal powers in intersections of finite coverings by ideals, Comm. Algebra \textbf{22}(8) (1994) 3099-3102.
\bibitem{Bhatt}
B. Bhatt, On the direct summand conjecture and its derived variant, Invent. Math. \textbf{212}(2) (2018) 297-317.
\bibitem{Beauregard}
R.A. Beauregard, Overrings of B\'{e}zout domains, Canad. Math. Bull.  \textbf{16}(4) (1973) 475-477.
\bibitem{de Jong}
A.J. de Jong, et al., The Stacks Project, https : //stacks.math.columbia.edu, (2023).
\bibitem{Gottlieb}
C. Gottlieb, On finite unions of submodules, Commun. Algebra, \textbf{43} (2015) 847-855.
\bibitem{Hochster}
M. Hochster, Cyclic purity versus purity in excellent Noetherian rings, Trans. Amer. Math. Soc. \textbf{231}(2) (1977) 463-488.
\bibitem{McCoy}
N.H. McCoy, A note on finite unions of ideals and subgroups, Proc. Amer Math. Soc. \textbf{8} (1957) 633-637.
\bibitem{Quartararo-Butts}
P. Quartararo and H.S. Butts, Finite unions of ideals and modules, Proc. Amer Math. Soc. \textbf{52} (1975) 91-96.
\bibitem{Quartararo}
P. Quartararo, The Finite Union of Ideals in a Ring $R$, Doctoral Dissertation, Louisiana State University (1972).
\bibitem{Samuel}
P. Samuel, Les \'{e}pimorphismes d'anneaux, seminaire d'algebre commutative, 1967/1968, Paris, Secr\'{e}tariat math\'{e}matique, (1968).
\bibitem{Tarizadeh-Chen}
A.
Tarizadeh and J.
Chen, Avoidance and absorbance, J.
Algebra, \textbf{582} (2021) 88-99.
\bibitem{Tarizadeh 4}
A. Tarizadeh, Finite type epimorphisms of rings, J. Algebra Appl. \textbf{19}(4) (2020) 2050063.
\bibitem{Tarizadeh 5}
A. Tarizadeh, Gabriel localizations with applications to flat epimorphisms of rings, Proc. Indian Acad. Sci. (Math. Sci.) (2022) 132:21.
\bibitem{Tarizadeh}
A.
Tarizadeh, On flat epimorphisms of rings and pointwise localizations, Mathematica, \textbf{64}(87), N$^{\circ}$ 1, (2022) 129-138.
\bibitem{Tarizadeh yalniz}
A. Tarizadeh, On Grothendieck groups and rings with exact sequences for the Picard, $K_{0}(R)^{\ast}$ and ideal class groups,  	
https://doi.org/10.48550/arXiv.2210.02951, (2022).
\end{thebibliography}
\end{document}